\newcommand{\bC}{{\mathbb C}}
\newcommand{\bK}{{\mathbb K}}
\newcommand{\bI}{{\mathbb I}}
\newcommand{\bP}{{\mathbb P}}
\newcommand{\bZ}{{\mathbb Z}}
\newcommand{\bA}{{\mathbb A}}
\newcommand{\bD}{{\mathbb D}}
\newcommand{\bG}{{\mathbb G}}
\newcommand{\sA}{{\mathcal A}}
\newcommand{\sD}{{\mathcal D}}
\newcommand{\sM}{{\mathcal M}}
\newcommand{\sO}{{\mathcal O}}
\newcommand{\sT}{{\mathcal T}}
\newcommand{\sV}{{\mathcal V}}
\newcommand{\sX}{{\mathcal X}}
\newcommand{\g}{{\mathfrak g}}
\newcommand{\Gr}{{{\mbox{\rm --\,Grmod}}}} 
\newcommand{\Mo}{{{\mbox{\rm --\,Mod}}}} 
\newcommand{\End}{{{\mbox{\rm End}}}} 
\newcommand{\Ex}{{{\mbox{\rm Exp}_G}}} 
\newcommand{\Hom}{{{\mbox{\rm Hom}}}} 
\newcommand{\Qc}{{{\mbox{\rm --\,Qcoh}}}} 
\newcommand{\Qcoh}{{{\mbox{\rm Qcoh}}}} 
\newcommand{\To}{{{\mbox{\rm --\,Tors}}}} 
\newcommand{\Proj}{ {{\mbox{\rm Proj}}} } 
\newcommand{\Ke}{{{\mbox{\rm Ker}}}} 
\newcommand{\Ima}{{{\mbox{\rm Im}}}} 
\newcommand{\coke}{{{\mbox{\rm coker}}}}
\newcommand{\Ext}{{{\mbox{\rm Ext}}}}
\newcommand{\ve}{\ensuremath{\mathbf{e}}\xspace}
\newcommand{\vE}{\ensuremath{\mathbf{E}}\xspace}
\newcommand{\vf}{\ensuremath{\mathbf{f}}\xspace}
\newcommand{\vg}{\ensuremath{\mathbf{g}}\xspace}
\newcommand{\vv}{\ensuremath{\mathbf{v}}\xspace}
\newcommand{\vw}{\ensuremath{\mathbf{w}}\xspace}
\newcommand{\vx}{\ensuremath{\mathbf{x}}\xspace}
\newenvironment{subproof}{\vspace{-1ex}\par\noindent\normalsize {\em
    Proof of Claim}:}{{\hfill $\Box$}}
\newtheorem{theorem}{Theorem}
\newtheorem{prop}[theorem]{Proposition}
\newtheorem{lemma}[theorem]{Lemma}
\newtheorem{cor}[theorem]{Corollary}
\newtheorem{claim}{Claim}
\begin{document}
\title{D-modules and projective stacks}
\author{Karim El Haloui}
\email{kelhaloui@gmail.com}
\author{Dmitriy Rumynin}
\email{D.Rumynin@warwick.ac.uk}
\address{Dept. of Maths, University of Warwick, Coventry,
  CV4 7AL, UK}
\date{December 12, 2016}
\subjclass[2010]{16S32, 14F10}

\begin{abstract}
We study twisted D-modules on weighted projective stacks.
We determine for which values of the twist and the weight
the global sections functor is an equivalence, thus, proving
a version of Beilinson-Bernstein Localisation Theorem. 
\end{abstract}

\maketitle

A key observation in the proof of Kazhdan-Lusztig Conjecture by
Beilinson and Bernstein is that the (generalised) flag varieties 
$G/P$ are D-affine.
This is known as Beilinson-Bernstein Localisation Theorem.
So far these are the only known connected smooth projective D-affine
varieties.
In particular, Thomsen proves that a toric smooth projective D-affine
variety must be a product of projective spaces \cite{Tho}. 
On the other hand, Van den Bergh proves that weighted projective
spaces
are D-affine (they are singular)
\cite{MVB}. 

The goal of this paper is to re-examine the D-affinity of weighted projective spaces.
Instead of looking at them as singular varieties, we consider them as stacks.
We give a necessary and sufficient criterion for a weighted projective stack to be D-affine.
Our method of proof is also different: Van den Bergh uses Hodges-Smith Criterion for D-affinity \cite{HS},
while we do a direct calculation.

In section 1 we make general observations about D-affinity on varieties.
In section 2 we establish a technical framework for working with twisted D-modules
on a smooth projective stack.
In section 3 we use this framework to
study D-modules on weighted projective stacks.
\section*{Acknowledgement}
The authors would like to thank 
Mohamed Barakat,
Mark Bell, 
Gwyn Bellamy,
Michael Groechenig,  
Paul Smith and 
Jesper Funch Thomsen
for useful discussions.
The second author was partially supported by the Russian Academic Excellence Project `5--100' and by Leverhulme Foundation (grant RPG-2014-106). 

\section{D-modules on varieties}
We work with  a connected algebraic variety $X$ 
over an algebraically closed field
$\bK$
of characteristics zero in this section.
Let
$\sO_X$ 
be 
its sheaf of functions,
$\sD_X$ its sheaf of differential
operators, 
$D(X) = \sD_X (X)$ its global sections.
We consider the category of quasicoherent $\sD_X$-modules
$\sD_X\Qc$ and the category of modules over 
the globally defined
differential operators
$D(X)\Mo$. They are connected by the global sections functor
$$\Gamma: \sD_X\Qc \rightarrow D(X)\Mo.$$
$X$ is called {\em D-affine} if $\Gamma$ is an equivalence.
Affine varieties are D-affine but
the converse statement is not true:
the generalised flag variety $G/P$ is a smooth projective D-affine 
variety \cite{BB}. In the light of this result, it is interesting to pose the following question.

\noindent 
{\bf Question:} {\em Classify connected smooth projective D-affine
varieties.}

It would be interesting to find other examples of such varieties besides $G/P$.
Notice that any such example $X$ must have 
zero Hodge numbers $h^{0,m} (X)$ for $m>0$ because $\sO_X$ 
is a $\sD_X$-module, hence, has no higher cohomology. 
A glimmering hope for settling this question is the result of Thomsen who classified 
smooth toric D-affine
varieties \cite{Tho}.
Hereby we will explain that
some other classes of varieties 
will not give new examples.

Recall that a variety $X$ is homogeneous if a connected algebraic (not
necessarily
linear) group $G$ acts transitively on $X$. For a complete variety $X$
it is equivalent to asking that the automorphism group of $X$ acts
transitively
on $X$ \cite{SdS}. Such $X$ is necessarily smooth.

\begin{theorem}
Suppose $X$ is a homogeneous complete D-affine variety.
Then $X$ is isomorphic to a generalised flag variety.
\end{theorem}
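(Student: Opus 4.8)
The plan is to use the structure theory of homogeneous complete varieties. By a theorem of Borel–Remmert (or Chevalley's structure theorem applied to the automorphism group), a homogeneous complete variety $X$ decomposes, after passing to a suitable covering group, as a fibration over an abelian variety $A$ with fibre a generalised flag variety $G/P$ for $G$ semisimple; concretely $X \cong A \times G/P$ up to isogeny, or more precisely $X$ is the total space of a $G/P$-bundle over $A$. So the first step is to invoke this classification and reduce the problem to showing that the abelian variety factor is trivial, i.e. $\dim A = 0$.

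**Next I would** exploit the cohomological obstruction already flagged in the excerpt: any smooth projective D-affine variety $X$ has $h^{0,m}(X) = H^m(X, \sO_X) = 0$ for all $m > 0$, since $\sO_X$ is a $\sD_X$-module and hence $\Gamma$-acyclic (as $\Gamma$ is exact on $\sD_X\Qc$ when $X$ is D-affine). For an abelian variety $A$ of dimension $g$ one has $H^m(A,\sO_A) \cong \wedge^m H^1(A,\sO_A)$ with $H^1(A,\sO_A)$ of dimension $g$, so $h^{0,1}(A) = g$. If $X$ fibres over $A$ with flag-variety fibres, then since $H^{>0}(G/P, \sO_{G/P}) = 0$ the Leray spectral sequence (or just the projection formula) gives $H^m(X,\sO_X) \cong H^m(A,\sO_A)$; in particular $h^{0,1}(X) = g$. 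D-affinity forces $g = 0$, so $A$ is a point and $X \cong G/P$.

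**The main obstacle** is making the reduction step rigorous and self-contained: one must either cite the Borel–Remmert / Blanchard–Borel structure theorem for homogeneous complete varieties in exactly the form needed (total space of a flag bundle over an abelian variety, with the relevant cohomology of $\sO_X$ computable via the fibration), or reprove enough of it. A secondary subtlety is that $X$ may only be homogeneous under a non-linear group $G$; one passes to the identity component of $\mathrm{Aut}(X)$, applies Chevalley's theorem to write it as an extension of an abelian variety by a linear group, and checks that the linear part still acts with flag-variety orbits — so that the quotient map $X \to A$ is the desired fibration. Once the fibration is in hand, the cohomology vanishing argument is routine. One should also note $X$ is automatically smooth (homogeneous), so there is no issue in speaking of $h^{0,m}$.

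Finally, I would remark that this argument is in the spirit of the Hodge-number obstruction mentioned above and does not require any of the weighted-projective-stack machinery of later sections; it is purely a consequence of the classification of homogeneous complete varieties together with the acyclicity of $\sO_X$.
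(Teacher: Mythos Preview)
Your proposal is correct and follows essentially the same route as the paper: invoke Borel--Remmert to express $X$ in terms of a flag variety and an abelian variety $A$, then use the acyclicity of $\sO_X$ forced by D-affinity to kill $A$. The only cosmetic differences are that the paper cites the product form $X \cong A \times G/P$ directly (so no Leray spectral sequence is needed) and uses Serre duality to see $H^{\dim A}(A,\sO_A)\neq 0$ rather than $H^1$; your fibration formulation is slightly more cautious but leads to the same conclusion.
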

\begin{proof}
By Borel-Remmert Theorem \cite{SdS}
$X$ is a product of a partial flag variety and an abelian variety $A$.
It remains to notice that $A$ is not 
D-affine because $R^{dim A} \Gamma (A, \sO_A)\neq 0$ 
by Serre's duality, unless $A$ is a point. 
This would imply that $R^{dim A} \Gamma (X, \sO_X)\neq 0$
that is impossible because $\sO_X$ is a $\sD_X$-module.
Thus, $A$ is a point and $X$ is a generalised flag variety.
\end{proof}

If $\bK=\bC$ is the field of complex numbers, this result can be slightly
improved.
\begin{theorem}
\label{C-homo}
Suppose $X$ is a complex complete D-affine variety
and the tangent sheaf $\sT_X$ is generated by global sections.
Then $X$ is isomorphic to a generalised flag variety.
\end{theorem}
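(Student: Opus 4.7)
The plan is to reduce to the previous theorem by producing a connected algebraic group acting transitively on $X$. For the group, I would take $G := \mathrm{Aut}^0(X)$, the identity component of the automorphism group scheme. By Matsumura-Oort representability, this is a finite-dimensional algebraic group over $\mathbb{C}$ with Lie algebra canonically isomorphic to $H^0(X, \sT_X)$. An alternative transcendental route is available: since $X$ is compact, Bochner-Montgomery integrates the global holomorphic vector fields to a complex Lie group action on $X$, and GAGA converts the resulting action into algebraic data.

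With $G$ in hand, fix $x \in X$ and consider the orbit map $\alpha_x \colon G \to X$, $g \mapsto g \cdot x$. Its differential at the identity is the evaluation map $H^0(X, \sT_X) \to T_x X$, which is surjective by the global generation hypothesis on $\sT_X$. Hence each orbit $G \cdot x$ is open in $X$, and because the orbits partition $X$ and $X$ is connected, there is only a single orbit. Thus $X$ is homogeneous, and the previous theorem identifies it with a generalised flag variety.

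The main obstacle is the first step: promoting the Lie-algebraic datum $H^0(X, \sT_X)$ to an honest algebraic group acting on $X$. It is here that the complex hypothesis is essential, either through the algebraic route of Matsumura-Oort or the analytic route of Bochner-Montgomery combined with GAGA; in positive characteristic or over non-closed fields the link between global vector fields and an integrated group action would be far more delicate. Once the group action is in place, global generation forces open orbits, and connectedness finishes the argument.
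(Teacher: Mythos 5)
Your proof is correct, but it takes a genuinely different route from the paper's. The paper integrates the Lie algebra $\g = \Gamma(\sT_X)$ \emph{analytically}: it takes the simply-connected complex Lie group with Lie algebra $\g$, uses the second Lie theorem and compactness of $X$ to produce one-parameter groups of diffeomorphisms, and assembles these into a real-analytic transitive action. Because the resulting group is only analytic, the paper cannot invoke Theorem 1 directly; instead it applies Akhiezer's compact-manifold form of Borel--Remmert (which yields an $A$-fibration over a flag variety rather than a product) and obtains the contradiction from the derived push-forward along $X \to Y$. You instead take $G = \mathrm{Aut}^0(X)$, which by Matsumura--Oort (together with Cartier's theorem in characteristic $0$) is an honest algebraic group with Lie algebra $H^0(X,\sT_X)$; global generation makes the differential of every orbit map surjective, orbits are therefore open, and connectedness forces transitivity, so Theorem 1 applies verbatim. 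This is shorter and stays entirely within algebraic geometry.

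One remark on your last paragraph: you present the complex hypothesis as essential to both of your routes, but it is needed only for the Bochner--Montgomery/GAGA variant. The Matsumura--Oort argument works over any algebraically closed field of characteristic zero, so your proof in fact settles the extension question the paper raises immediately after Corollary~\ref{C-conn}, where the authors note that their analytic method does not generalise. That is a genuine improvement, not merely an alternative proof.
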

\begin{proof}
Since $X$ is a complete algebraic variety,
the global (algebraic) 
vector fields $\g = \Gamma (\sT_X)$ form 
a finite dimensional Lie algebra \cite[p. 95]{Sha}. 
Let $G$ be an analytic connected simply-connected Lie group 
with Lie algebra~$\g$.
The group $G$ 
locally acts on $X$ by the second Lie Theorem  \cite[p. 23]{Akh}. 
Since $X$ is compact, each element $a\in\g$ defines
a one-parameter group $\gamma_a (t)$ of (global) diffeomorphisms of 
$X$ \cite[p. 20]{Akh}. Choosing a real basis $a_1,\ldots a_k$ of $\g$,
we can extend the assignment
$$
\Ex (t_1a_1)\cdot\Ex (t_2a_2)\cdot \ldots \Ex (t_ka_k)
\mapsto
\gamma_{a_1}(t_1)\gamma_{a_2}(t_2)\ldots\gamma_{a_k}(t_k) 
$$
to a global (real) analytic action of $G$ on $X$ \cite[p. 29]{Akh}.

Since $\sT_X$ is generated by global sections, 
each point $x\in X$ lies in the interior of its orbit $G\cdot x$.
Hence each point belongs to an open set, 
entirely within this point's orbit. By connectedness there is only one
orbit, hence, $X\cong G/H$ as analytic manifolds.

By Borel-Remmert Theorem \cite[p. 101]{Akh},
there exists an abelian variety $A$ such that
$X$ is an $A$-fibration over a generalised flag variety $Y$.
If $A$ is a point, we are done.
If $A$ is not a point, 
$R^{dim A} \Gamma (A, \sO_A)\neq 0$ 
by Serre's duality. Thus, the derived push-forward
$R(X\rightarrow Y)_\ast (\sO_X)$ has higher cohomology
and so does $\sO_X$. This is a contradiction.
\end{proof}

Observe that $\sT_X$ is not usually a $\sD_X$-module. 
This would require a flat connection on $\sT_X$ which is
quite rare. For instance,
abelian varieties admit a flat connection on $\sT_X$ 
as well as any other variety with a trivial tangent sheaf.
On the other hand, the only generalised flag variety
with a flat connection on $\sT_X$ is a point.

\begin{cor}
\label{C-conn}
If $X$ is complex complete D-affine variety and
$\sT_X$ is a $\sD_X$-module, then $X$ is the point.
\end{cor}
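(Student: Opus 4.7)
The plan is to argue by contradiction, comparing two expressions for $\chi(\sT_X)$; this sidesteps Theorem \ref{C-homo} and its global-generation hypothesis, yielding a short self-contained proof. Suppose $\dim X>0$; then $\sT_X$ is a non-zero $\sD_X$-module, and I will extract a numerical contradiction.

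First I would extract from D-affinity that $\chi(\sT_X)=h^0(X,\sT_X)>0$. Since $\Gamma:\sD_X\Qc\to D(X)\Mo$ is an equivalence of abelian categories it is exact, and injective quasi-coherent $\sD_X$-modules are $\Gamma$-acyclic, so $H^i(X,\sT_X)=0$ for $i>0$; hence $\chi(\sT_X)=h^0(X,\sT_X)$. Moreover $\Gamma$ sends the non-zero object $\sT_X$ to a non-zero $D(X)$-module, so $h^0(X,\sT_X)>0$.

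Next I would extract from the $\sD_X$-structure that $\chi(\sT_X)=0$. On a smooth variety, a $\sD_X$-module structure on a coherent $\sO_X$-module is equivalent to a flat algebraic connection; via GAGA this yields a flat holomorphic connection on $\sT_X^{\mathrm{an}}$, and by Chern--Weil all real Chern classes $c_i(\sT_X)\in H^{2i}(X,\bR)$ vanish. Consequently $\mathrm{ch}(\sT_X)=\dim X$ and $\mathrm{td}(X)=\mathrm{td}(\sT_X)=1$ in rational cohomology, and Hirzebruch--Riemann--Roch gives $\chi(\sT_X)=\int_X\dim X\cdot 1=0$, since $\dim X>0$ forces the integrand to vanish in top cohomological degree.

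Combining these yields $0=\chi(\sT_X)=h^0(X,\sT_X)>0$, a contradiction; hence $\dim X=0$ and $X$ is a point. The main obstacle is the Chern-class vanishing, which relies on the standard Chern--Weil fact that a flat holomorphic connection on a holomorphic bundle over a compact complex manifold kills all real Chern classes; with that input the Riemann--Roch computation and the D-affinity bookkeeping are routine.
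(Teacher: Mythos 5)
Your proof is correct and takes a genuinely different route from the paper's. The paper deduces the corollary from Theorem~\ref{C-homo}: D-affinity and the $\sD_X$-module structure on $\sT_X$ first force $\sT_X$ to be generated by global sections (since every object of $\sD_X\Qc$ is), so Theorem~\ref{C-homo} makes $X$ a generalised flag variety; one then observes that no positive-dimensional $G/P$ carries a flat connection on its tangent bundle (e.g.\ $c_1(\sT_{G/P}) = -K_{G/P}$ is ample, hence non-zero). Your argument replaces the whole analytic detour of Theorem~\ref{C-homo} (Lie algebra of vector fields, second Lie theorem, Borel--Remmert) with a single numerical comparison: D-affinity gives $\chi(\sT_X) = h^0(X,\sT_X) > 0$, while flatness kills the Chern classes and Hirzebruch--Riemann--Roch forces $\chi(\sT_X) = 0$ once $\dim X > 0$. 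Two small remarks. First, your proof implicitly assumes $X$ is smooth, which is needed both for the equivalence ``$\sD_X$-module on a coherent sheaf $=$ flat connection'' and for Riemann--Roch; the paper secures smoothness en route by showing $X$ is homogeneous, whereas you take it as read. Second, if one replaces Chern--Weil/GAGA by the Atiyah-class formulation (a connection on $\sT_X$ kills its Atiyah class, hence its Chern classes in Hodge cohomology, after which the Hodge-cohomological Riemann--Roch does the rest), your argument becomes entirely algebraic and therefore valid over any algebraically closed field of characteristic zero -- precisely the generalisation the paper notes its own proof cannot reach.
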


It would be interesting 
to extend Theorem~\ref{C-homo} and 
Corollary~\ref{C-conn} 
to varieties over an arbitrary
algebraically closed field $\bK$. 
Our proof does not work because we use analytic methods.

\section{D-modules on smooth projective stacks}

The theory of D-modules on stacks is known \cite{BD,DrGa}. 
Let $Y$ be a smooth algebraic variety with an action of 
an algebraic group $G$.
The quotient stack $[X]=[Y/G]$
admits the standard smooth atlas
$\xymatrix{
 G\times Y \ar@<-.2ex>[r]_a \ar@<.8ex>[r]^p & Y
}$
with the action and projection maps.
This atlas extends to a simplicial variety $\sX$
where $\sX_n = G^n \times Y$,
connected by the maps
$$
\sX (\varphi) : \sX_n \rightarrow \sX_m, \ \ 
\sX (\varphi) (g_1, \ldots g_n, y) =
(h_1, \ldots h_m, h_{m+1} \cdot y)
$$
where (with empty products equal to $1_G$)
$$
h_i = \prod_{j = \varphi(i-1)+1}^{\varphi(i)} g_j, \
h_{m+1} = \prod_{j = \varphi(m)+1}^{n} g_j
$$
for any non-decreasing function
$\varphi : [m]\rightarrow [n] =\{0,1, \ldots , n\}$.
For instance, these are the maps for the low dimensional faces
(recall that $\partial_i^n :[n-1] \rightarrow [n]$
is the increasing map without $i$ in the image):
$$
\sX (\partial_2^2) (g_1, g_2, y) = (g_1, g_2 \cdot y), \
\sX (\partial_1^2) (g_1, g_2, y) = (g_1 g_2 , y), 
$$
$$
\sX (\partial_0^2) (g_1, g_2, y) = (g_2 , y), \
\sX (\partial_1^1) (g, y) = g \cdot y, \
\sX (\partial_0^1) (g, y) = y.
$$

The category of quasicoherent D-modules on $[X]$
is equivalent to the category of
cosimplicial D-modules on $\sX$ \cite[6.2.2]{DrGa}.
Recall that a cosimplicial D-module $\sV$ consists
of a quasicoherent D-module $\sV_n$ on each $\sX_n$
together with an isomorphism of D-modules
$\sV(\varphi) : \sX(\varphi)^* \sV_m \rightarrow  \sV_{n}$
for any non-decreasing function
$\varphi : [m]\rightarrow [n]$
such that the simplicial identities hold. 

A cosimplicial D-module $\sV$ can be recovered (up to an isomorphism)
from the D-module $\sV_0$ and the D-module isomorphism
$$
\gamma: p^\ast \sV_0 = \sX(\partial_0^1)^\ast \sV_0
\xrightarrow{\sV(\partial_0^1)}
\sV_1
\xrightarrow{\sV(\partial_0^1)^{-1}}
\sX(\partial_1^1)^\ast \sV_0  = a^\ast \sV_0.
$$
The simplicial identities in dimension two
force {\em the cocycle condition} on the isomorphism $\gamma$,
coercing $(\sV_0,\gamma)$
into {\em a strongly equivariant D-module} on $Y$.
Vice versa, a strongly equivariant D-module on $Y$ can be extended
to a cosimplicial D-module on $\sX$.
This shows that the category of quasicoherent D-modules
on $[X]$ is equivalent to the category of strongly equivariant
quasicoherent D-modules on $Y$.

Further significant clarification is possible.
Consider a $\sD_Y$-module $M$ with a compatible $G$-action, i.e.,
$\,^g(dm) = \,^gd \, \,^gm$ for all $g\in G$, $d\in D$, $m\in M$.
This is sometimes called {\em a weakly equivariant D-module}.
Such a $G$-action yields an isomorphism of $\sO_G\otimes \sD_Y$-modules
$\gamma: p^\ast M \rightarrow a^\ast M$
\cite{Gin}.

The Lie algebra $\g$ of $G$ acts on $M$ in two ways:
via the differential of the action $\g\rightarrow \sD_Y$
and 
via the differential of the $G$-action.
These two actions coincide if and only if
$\gamma: p^\ast M \rightarrow a^\ast M$
is an isomorphism of $\sD_G\otimes \sD_Y$-modules
(note that $\sD_G\otimes \sD_Y\cong \sD_{G\times Y}$)
\cite{Gin}. This gives an alternative definition of
a strongly equivariant D-module.

The preceding discussion enables us (modulo equivalences of categories)
to define a quasicoherent $\sD_{[X]}$-module as a quasicoherent
strongly $G$-equivariant $\sD_Y$-module.

There are different notions of a projective stack, for instance,
a stack whose coarse moduli space is a projective variety.
Here we use a more restrictive notion:
a projective stack is a smooth closed substack of a weighted
projective stack \cite{Zho}. Let us spell it out.
Let $V=\bigoplus V_k$ be a positively graded 
$n+1$-dimensional $\bK$-vector space. Naturally
we treat it as a $\bG_m$-module with positive weights by
$\lambda \bullet \vv_k = \lambda^k \vv_k$ where $\vv_k \in V_k$.
Let $Y$ be a smooth closed $\bG_m$-invariant subvariety of $V\setminus
\{0\}$. We define {\em a projective stack} as the stack $[X]=[Y/\bG_m]$.
The G.I.T.-quotient $X=Y//\bG_m$ is the coarse moduli space of $[X]$.

Let us describe the category $\sO_{[X]}\Qc$ of quasicoherent sheaves on $[X]$.
Choose a homogeneous basis $\ve_i$ on $V$ 
with $\ve_i\in V_{d_i}$,
$i=0,1, \ldots, n$.
Let $\vx_i \in V^\ast$ be the dual basis. Then 
$\bK[V] = \bK [\vx_0,...,\vx_n ]$ possesses a natural grading
with  $\deg(\vx_{i})=d_{i}$.
Let $I$ be the defining ideal of $\overline{Y}$. Since $Y$ is $\bG_m$-invariant,
the ideal $I$ and the ring
$$
\bA\coloneqq \bK[\overline{Y}] = \bK [\vx_0,...,\vx_n ]/I
$$
are graded.
Both $X$ and $[X]$ can be thought of as the projective spectrum of
$\bA$.
The scheme $X$ is naturally isomorphic to the scheme theoretic 
$\Proj\, \bA$.
The stack $[X]$ is the Artin-Zhang projective spectrum
$\Proj_{AZ} \bA$ \cite{AKO}, 
i.e. its category of quasicoherent sheaves
$\sO_{[X]}\Qc$ is equivalent to the quotient category
$\bA\Gr/\bA\To$ where
$\bA\Gr$ is the category of
$\bZ$-graded $\bA$-modules, 
$\bA\To$ is its full subcategory of torsion modules.

Recall that 
$$
\tau_\bA (M) = \{m \in M \,\mid\, \exists N \; \forall k>N \; \bA_k m=0\}
$$
is {\em the torsion submodule of} $M$. $M$ is said to be {\em torsion} 
if $\tau_\bA (M)=M$. It can be seen as well that the torsion submodule of $M$ is the sum of all the finite dimensional submodules of $M$ since $\bA$ is connected.

Denote by
$$
\pi_\bA:\bA\Gr \rightarrow \bA\Gr/\bA\To
$$
the quotient functor. Since $\bA\Gr$ has enough injectives and $\bA\To$ is dense then there exists a section functor 
$$
\omega_\bA:\bA\Gr/\bA\To \rightarrow \bA\Gr
$$ which is right adjoint to $\pi_\bA$ in the sense that 
$$
\Hom_{\bA\mbox{\tiny \Gr}}(N,\omega_\bA(\mathcal{M}))\cong\Hom_{\bA\mbox{\tiny\Gr}/\bA\mbox{\tiny\To}}(\pi_\bA(N),\mathcal{M}).
$$
Recall that $\pi_\bA$ is exact, $\omega_\bA$ is left exact and $\pi_{\bA}\omega_{\bA} \cong Id_{\bA\mbox{\tiny\Gr}/\bA\mbox{\tiny\To}}$.
We call $\omega_\bA\pi_\bA(M)$ the {\em $\bA$-saturation} of $M$. 
We say that a module is {\em $\bA$-saturated} 
if it is isomorphic to the saturation of a module. It can be seen from the adjunction that an $\bA$-saturated module is torsion-free and is isomorphic to its own saturation. If $M$ and $N$ are $\bA$-saturated, then being isomorphic in $\bA\Gr/\bA\To$ is equivalent to being isomorphic in $\bA\Gr$.

We need a description of
the global sections functor on $[X]$ in these terms:
$$
\Gamma : \sO_{[X]}\Qc \rightarrow {\rm VS}_\bK, \ \ 
\Gamma (\mathcal{M}) = \omega_\bA(\mathcal{M})_0.
$$
In particular, if $M$ is an $\bA$-saturated module then $$
\Gamma(\pi_\bA(M))=M_0.
$$

The sheaf $\sO_{[X]}(k)$ is defined
as $\pi_\bA(\bA[k])$ where $\bA[k]$ is the shifted regular module and the grading is given by $\bA[k]_m = \bA_{k+m}$.

In particular, $\Gamma (\sO_{[X]}(k)) = \bA_k$ if $\bA[k]$ is $\bA$-saturated which is the case for polynomial rings of more than two variables \cite{AZ}. A well-known example of a ring, not $\bA$-saturated (as an $\bA$-module), is the polynomial ring in one variable $\bA=\bK[x]$. Its $\bA$-saturation is the Laurent polynomial ring $\bK[x,x^{-1}]$ seen as an $\bA$-module.
Finally we will need the push-forward functor
$$
\pi_\ast : \sO_{[X]}\Qc \rightarrow \sO_{X}\Qc ,
$$
given by associating a sheaf on $X$ to a graded $\bA$-module.
In general, it is not an equivalence. For instance,
$\sO_{[X]}(k)$ is an invertible sheaf but
$\sO_{X}(1)\cong \pi_\ast (\sO_{[X]}(1))$ is not invertible, in
general
\cite{Dol}.

Let us now describe the (twisted) $\sD_{[X]}$-modules.
Let $\partial_i = \partial / \partial \vx_i$, $i=0,1,\ldots, n$.
The Weyl algebra
$D(V)=\bK \langle \vx_0, \ldots ,\vx_n, \partial_0,
\ldots ,\partial_n \rangle$
gets a grading from the
$\bG_m$-action on $V$:
$\deg (\vx_i) = d_i$, 
$\deg (\partial_i) = -d_i$. 
We define {\em the reduced Weyl algebra} as
$$
\bD\coloneqq 
{\rm End}_{D(V)} (D(V)/ID(V)) \cong \bI(ID(V))/ID(V)$$
where 
$$
\bI(ID(V)) = \{ \vw \in D(V) \,\mid\, \vw I D(V) \subseteq I D(V) \}
$$ 
is the idealiser of $ID(V)$ in $D(V)$. Notice that $\bD$ is graded:
$I$ is graded, then $ID(V)$ is graded,
then $\bI(ID(V))$ is graded,
and finally $\bD$ is graded.
Observe that $\bA$ 
is a graded subalgebra of $\bD$ since $\bK[\vx_i] \subseteq \bI(ID(V))$.
It is known that for $\vw \in D(V)$ \cite[15.5.9]{MR}
$$
\vw \in ID(V) \Leftrightarrow \vw (\bK[\vx_i]) \subseteq I
\ \ \mbox{ and } \ \ 
\vw \in \bI(ID(V)) \Leftrightarrow \vw (I) \subseteq I
$$
where $\vw$ acts naturally on polynomials in $I$. This defines an algebra embedding
$\bD\hookrightarrow \End_\bK (\bA)$ whose image lies in
$D(\overline{Y})$,
the ring of differential operators on $\bA$. 
\begin{prop} \cite[15.5.13]{MR} \label{new_isom}
The map 
$\phi : \bD\rightarrow D(\overline{Y})$
is an isomorphism.
\end{prop}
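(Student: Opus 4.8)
The plan is to handle the injectivity and the surjectivity of $\phi$ separately; the injectivity is in fact already available. By construction $\bD=\bI(ID(V))/ID(V)$ acts on $\bA=\bK[V]/I$ by descent, and the discussion preceding the statement exhibits this action as an embedding $\bD\hookrightarrow\End_\bK(\bA)$ with image inside $D(\overline{Y})$: an element $\vw\in\bI(ID(V))$ induces the zero operator on $\bA$ exactly when $\vw(\bK[\vx_i])\subseteq I$, i.e. $\vw\in ID(V)$. This map also respects the order filtrations, since if $\vw\in D(V)$ has order $\le m$ and $\vw(I)\subseteq I$, then the defining iterated commutators of $\vw$ reduce modulo $I$ to those of the induced operator on $\bA$, so the latter again has order $\le m$. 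Thus the whole statement reduces to surjectivity of $\phi$.

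Since every element of $D(\overline{Y})$ has bounded order, it is enough to lift an arbitrary differential operator $\theta$ on $\bA$ of order $\le m$ to a differential operator on $\bK[V]$ of order $\le m$ preserving $I$. Pre-composing $\theta$ with the projection $p\colon\bK[V]\twoheadrightarrow\bA$ yields a $\bK$-linear map $\theta'=\theta\circ p\colon\bK[V]\to\bA$; its iterated commutators against elements of $\bK[V]$ reduce through $p$ to those of $\theta$ against elements of $\bA$, so $\theta'$ is a differential operator of order $\le m$ with values in the $\bK[V]$-module $\bA$, and evidently $\theta'(I)=\theta(0)=0$. Now I would invoke the one genuinely structural ingredient: for a polynomial ring $\bK[V]$ the algebra $D(V)$ is the \emph{full} ring of differential operators on $\bK[V]$, its order-$\le m$ part being the free left $\bK[V]$-module on $\{\partial^\alpha:|\alpha|\le m\}$; equivalently the module of principal parts $P^m_{\bK[V]/\bK}$ is $\bK[V]$-free of finite rank. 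Since $P^m_{\bK[V]/\bK}$ is projective, applying $\Hom_{\bK[V]}(P^m_{\bK[V]/\bK},-)$ to the surjection $p$ preserves surjectivity; that is, every differential operator $\bK[V]\to\bA$ of order $\le m$ has the form $p\circ\Theta$ with $\Theta\in D(V)$ of order $\le m$. Applying this to $\theta'$ gives $\Theta\in D(V)$ with $p\circ\Theta=\theta'$, and then $\Theta(I)\subseteq I$, because $p(\Theta(r))=\theta'(r)=0$ for $r\in I$. Hence $\Theta\in\bI(ID(V))$ by the idealiser criterion recalled above, and the class of $\Theta$ in $\bD$ is carried by $\phi$ to the operator $r+I\mapsto\Theta(r)+I=\theta(r+I)$, i.e. to $\theta$. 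So $\phi$ is onto.

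I expect the main obstacle to be precisely this lifting step: that a differential operator $\bK[V]\to\bA$ of finite order lifts to an operator of the same order on $\bK[V]$. Here the polynomial nature of $\bK[V]$ is genuinely used — one needs the Weyl algebra to account for \emph{all} differential operators on affine space and the modules of principal parts to be projective, so that the relevant $\Hom$ functor is exact and the lift exists; for a more general commutative base in place of $\bK[V]$ this can fail. The remaining ingredients — injectivity, the reduction-modulo-$I$ bookkeeping with iterated commutators, and the equivalence $\vw\in\bI(ID(V))\Leftrightarrow\vw(I)\subseteq I$ — are already in place from the preceding setup, so once the lifting is secured the argument closes at once.
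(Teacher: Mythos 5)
Your proof is correct, and it supplies an actual argument where the paper simply defers to the reference \cite[15.5.13]{MR} without giving one. Injectivity of $\phi$, and the fact that its image lands in $D(\overline{Y})$, are already contained in the preamble, so the content of the proposition is surjectivity, and you isolate the one genuinely non-trivial step: the lift. Given $\theta\in D(\overline{Y})$ of order $\le m$, pre-composing with $p\colon\bK[V]\twoheadrightarrow\bA$ yields a differential operator $\theta'=\theta\circ p\colon\bK[V]\to\bA$ of order $\le m$ vanishing on $I$; identifying order-$\le m$ differential operators into a module $M$ with $\Hom_{\bK[V]}(P^m_{\bK[V]/\bK},M)$ and using that the module of principal parts of a polynomial ring is free (hence projective), the surjection $p$ induces a surjection on differential operators of each bounded order, so $\theta'$ lifts to some $\Theta\in D(V)$ of order $\le m$ with $p\circ\Theta=\theta'$. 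Then $\Theta(I)\subseteq I$, so $\Theta\in\bI(ID(V))$ by the idealiser criterion quoted in the text, and $\phi$ sends its class to $\theta$. This is sound, and it cleanly pinpoints the two hypotheses that make the statement work: characteristic zero, so that $D(V)$ coincides with the full ring of Grothendieck differential operators on affine space with the usual order filtration; and the smoothness of the ambient $\bK[V]$, which gives freeness of $P^m_{\bK[V]/\bK}$ and hence exactness of the lifting. Your closing caveat is well placed — for a singular ambient ring in place of $\bK[V]$ the lifting step, and with it the proposition, can fail.
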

The element $\sum_i d_i \vx_i \partial_i$
belongs to the idealiser $\bI(ID(V))$.
We call its image in $\bD$ {\em the Euler field}
$$
\vE=\sum_i d_i \vx_i \partial_i + ID(V).
$$
It belongs to $\bD_0$ and defines the grading of $\bD$ and its subalgebra
$\bA$.
\begin{lemma} 
\label{Efield}
Let $\vx \in \bD$. Then 
$\vx\in \bD_k$ if and only if 
$\vE \vx - \vx\vE = k\vx$.
\end{lemma}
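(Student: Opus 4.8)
The plan is to reduce the statement to the analogous computation in the ambient Weyl algebra $D(V)$ and then descend to the quotient $\bD=\bI(ID(V))/ID(V)$.

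\emph{Step 1: the assertion in $D(V)$.} Put $\vE_0=\sum_i d_i\vx_i\partial_i$, so that $\vE$ is the class of $\vE_0$. The operator $[\vE_0,-]$ is a derivation of $D(V)$, and on the generators one computes directly $[\vE_0,\vx_j]=d_j\vx_j=\deg(\vx_j)\vx_j$ and $[\vE_0,\partial_j]=-d_j\partial_j=\deg(\partial_j)\partial_j$. By the Leibniz rule it follows that $[\vE_0,\vw]=\ell\,\vw$ for every homogeneous $\vw\in D(V)_\ell$. Conversely, if $\vw=\sum_\ell\vw_\ell$ is the decomposition into homogeneous components and $[\vE_0,\vw]=k\vw$, then $\sum_\ell(\ell-k)\vw_\ell=0$, whence $\vw_\ell=0$ for $\ell\neq k$ and $\vw=\vw_k\in D(V)_k$.

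\emph{Step 2: descent to $\bD$.} Recall (as observed in the text) that $ID(V)$ and $\bI(ID(V))$ are graded subspaces of $D(V)$, so the grading on $\bD$ is given by $\bD_k=\bigl(\bI(ID(V))\cap D(V)_k\bigr)\big/\bigl(ID(V)\cap D(V)_k\bigr)$. For the ``only if'' direction, given $\vx\in\bD_k$ choose a homogeneous lift $\vw\in\bI(ID(V))\cap D(V)_k$; by Step 1, $[\vE_0,\vw]=k\vw$ in $D(V)$, and passing to $\bD$ yields $\vE\vx-\vx\vE=k\vx$. For the ``if'' direction, take any lift $\vw\in\bI(ID(V))$ of $\vx$ and write $\vw=\sum_\ell\vw_\ell$ with $\vw_\ell\in D(V)_\ell$; since $\bI(ID(V))$ is graded, each $\vw_\ell$ lies in $\bI(ID(V))$. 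The hypothesis $\vE\vx-\vx\vE=k\vx$ means $[\vE_0,\vw]-k\vw\in ID(V)$, i.e. $\sum_\ell(\ell-k)\vw_\ell\in ID(V)$; as $ID(V)$ is graded, its degree-$\ell$ component $(\ell-k)\vw_\ell$ lies in $ID(V)$ for every $\ell$, so $\vw_\ell\in ID(V)$ whenever $\ell\neq k$. Hence $\vw\equiv\vw_k\pmod{ID(V)}$, that is $\vx=\vw_k+ID(V)\in\bD_k$.

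The only point requiring care — and the closest thing here to an obstacle — is the bookkeeping of gradings through the quotient: one must use that both $ID(V)$ and its idealiser are homogeneous, so that homogeneous components may be extracted on both sides of a congruence modulo $ID(V)$, and that the grading thereby induced on $\bD$ is precisely the one for which $\vE$ is the grading operator. Everything else is the routine derivation computation on the generators $\vx_i,\partial_i$ of the Weyl algebra. (Alternatively, one can argue inside $\End_\bK(\bA)$ via Proposition~\ref{new_isom}, using that $\vE$ acts on $\bA$ as the Euler operator and that $\vx\in\bD_k$ iff $\vx(\bA_m)\subseteq\bA_{m+k}$ for all $m$; this gives the same result with the same grading bookkeeping.)
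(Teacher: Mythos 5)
The proposal is correct and takes essentially the same approach as the paper: reduce the commutator identity to a direct computation on the generators $\vx_i,\partial_i$. You additionally spell out the Leibniz propagation to arbitrary monomials, the converse direction by decomposing into homogeneous components, and the descent through the quotient $\bD=\bI(ID(V))/ID(V)$, all of which the paper's two-line proof leaves implicit.
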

\begin{proof}
It suffices to check it on the generators:
$$\vE\vx_i =\sum_j d_j\vx_j\partial_j \vx_i = \vx_i \vE + d_i \vx_i.$$
Similarly, 
$$\vE\partial_i =\partial_i \vE - d_i \partial_i.$$
\end{proof}
The Euler field can be used to define gradings on $\bD$-modules.
\begin{lemma} 
\label{Efield2}
Let $M$ be a $\bD$-module. The span $M^\prime$ of all
eigenvectors of the Euler field $\vE$
is a $\bK$-graded $\bD$-submodule of $M$.
\end{lemma}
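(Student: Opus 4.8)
The plan is to separate the statement into two independent assertions: that $M^\prime$ decomposes as the direct sum of the eigenspaces of $\vE$ (which is what the $\bK$-grading means), and that $M^\prime$ is stable under the action of $\bD$ (which is the submodule property).

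For the grading, I would write $M_\lambda = \{m \in M \mid \vE m = \lambda m\}$ for $\lambda \in \bK$, so that by definition $M^\prime = \sum_{\lambda \in \bK} M_\lambda$. The elementary fact that eigenvectors of a single linear operator belonging to pairwise distinct eigenvalues are linearly independent then shows this sum is direct, so $M^\prime = \bigoplus_{\lambda \in \bK} M_\lambda$ is a $\bK$-graded vector space.

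For the submodule property, I would take a homogeneous element $\vx \in \bD_k$ and an eigenvector $m \in M_\lambda$, and compute, using Lemma~\ref{Efield}, that $\vE(\vx m) = (\vx \vE + k\vx)m = (\lambda+k)\,\vx m$, so that $\vx m \in M_{\lambda+k} \subseteq M^\prime$. Since $\bD$ is a $\bZ$-graded algebra, $\bD = \bigoplus_k \bD_k$, hence every element of $\bD$ is a finite sum of homogeneous elements; combined with the fact that $M^\prime$ is spanned by the $M_\lambda$, this gives $\bD \cdot M^\prime \subseteq M^\prime$. The same computation yields $\bD_k M_\lambda \subseteq M_{\lambda+k}$, so the $\bK$-grading on $M^\prime$ is compatible with the $\bZ$-grading on $\bD$, and $M^\prime$ is in fact a graded $\bD$-module.

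There is no genuine obstacle here; the proof is a direct application of Lemma~\ref{Efield}. The only point that needs a moment's attention is that one must first reduce an arbitrary element of $\bD$ to a sum of homogeneous components before invoking Lemma~\ref{Efield}, which is legitimate precisely because $\bD$ was observed to be $\bZ$-graded just before the statement.
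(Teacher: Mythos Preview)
Your proof is correct and follows essentially the same approach as the paper: both arguments reduce to the computation $\vE(\vx m) = (\lambda+k)\vx m$ for $\vx\in\bD_k$ and $m\in M^\lambda$, relying on Lemma~\ref{Efield}. The only cosmetic difference is that the paper carries this out for the specific generators $\vx_i$ and $\partial_i$ rather than for an arbitrary homogeneous element of $\bD$, and the paper leaves the direct-sum decomposition of $M^\prime$ implicit whereas you spell it out.
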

\begin{proof}
Let $m\in M^\lambda$, the $\lambda$-eigenspace of $\vE$.
Using Lemma~\ref{Efield},
$$\vE\vx_i m= \vx_i \vE m + d_i \vx_im = (\lambda +d_i)\vx_i m,$$
so  $$\vx_im\in M^{\lambda+d_i}.$$
Similarly, 
$$\vE\partial_i m=\partial_i \vE m - d_i \partial_i m = 
(\lambda -d_i)\partial_i m$$
and  $$\partial_im\in M^{\lambda-d_i}.$$
\end{proof}
Let us fix $\lambda\in\bK$.
In general, 
$$M\geq M^\prime = \oplus_{\mu\in\bK} M^\mu
\geq M^{(\lambda)} := \oplus_{n\in\bZ} M^{\lambda+n}.$$
A $\bD$-module $M$ is called
{\em $\lambda$-Euler} if $M=M^{(\lambda)}$.
A $\lambda$-Euler $\bD$-module $M$ admits a canonical
$\bZ$-grading given by $M_k=M^{k+\lambda}$. {\em The category of $\lambda$-Euler $\bD$-modules} $\bD\Gr^\lambda$ is a full subcategory of the category of graded $\bD$-modules $\bD\Gr$.
The full subcategory of the torsion (as $\bA$-modules) modules
is denoted $\bD\To^\lambda$. Notice as well that the torsion submodule 
of a graded $\bD$-module is a graded $\bD$-module and that if, moreover, it is $\lambda$-Euler, then the torsion submodule is $\lambda$-Euler too.

$\bD\Gr^\lambda$ is a locally small category. $\bD\To^\lambda$ is a Serre subcategory of $\bD\Gr^\lambda$ which is closed under taking arbitrary direct sums. Therefore, $\bD\To^\lambda$ is a localising subcategory of $\bD\Gr^\lambda$ \cite{Ga} and the quotient functor 
$$
\pi^\lambda_\bD:\bD\Gr^\lambda \rightarrow \bD\Gr^\lambda / \bD\To^\lambda
$$
is exact and has a right adjoint section functor
$$
\omega^\lambda_\bD:\bD\Gr^\lambda / \bD\To^\lambda \rightarrow \bD\Gr^\lambda.
$$
It follows that we have 
$$
\Hom_{\bD\mbox{\tiny\Gr}^\lambda}(N,\omega^\lambda_\bD(\mathcal{M}))\cong\Hom_{\bD\mbox{\tiny\Gr}^\lambda / \bD\mbox{\tiny\To}^\lambda}(\pi^\lambda_\bD(N),\mathcal{M}).
$$

\begin{theorem}
\label{DMod-q}
The category
$\sD_{[X]}\Qc$
of quasicoherent D-modules on the stack $[X]$ is equivalent
to the quotient category
$\bD\Gr^0 / \bD\To^0$.
\end{theorem}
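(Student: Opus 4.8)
The plan is to reduce everything to the affine cone $\overline{Y}$ and its vertex $0$, over which $\sD$-modules become ordinary $\bD$-modules. Write $j\colon Y\hookrightarrow\overline{Y}$ for the open immersion of the punctured cone; recall that $[X]=[Y/\bG_m]$ and that, by definition, $\sD_{[X]}\Qc$ is the category of strongly $\bG_m$-equivariant quasicoherent $\sD_Y$-modules. Since $\overline{Y}$ is affine and $\sD_{\overline{Y}}$ is a quasicoherent sheaf of $\sO_{\overline{Y}}$-algebras, the global sections functor is an equivalence $\sD_{\overline{Y}}\Qc\simeq D(\overline{Y})\Mo$, and $D(\overline{Y})\cong\bD$ by Proposition~\ref{new_isom} (all of this remains valid although $\overline{Y}$ may be singular).

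Next I would set up the dictionary ``strongly equivariant $=$ $0$-Euler'' on the cone. A $\bG_m$-equivariant structure on an $\sO_{\overline{Y}}$-module is the same as a $\bZ$-grading compatible with the grading of $\bA$, so a weakly equivariant $\sD_{\overline{Y}}$-module is exactly an object of $\bD\Gr$. The strong condition demands that the derivation coming from the equivariant structure, which acts on a homogeneous element of degree $k$ as multiplication by $k$, coincide with the action of the Euler field $\vE$; by Lemmas~\ref{Efield} and~\ref{Efield2} this is precisely the requirement that the module be $0$-Euler with canonical grading equal to the given one. Hence the strongly $\bG_m$-equivariant quasicoherent $\sD_{\overline{Y}}$-modules are naturally equivalent to $\bD\Gr^0$.

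Then I would identify restriction to $Y$ with a Serre quotient. The functor $j^\ast$ sends strongly equivariant $\sD_{\overline{Y}}$-modules to strongly equivariant $\sD_Y$-modules, i.e. under the previous identification it is an exact functor $\bD\Gr^0\to\sD_{[X]}\Qc$. Its kernel is exactly $\bD\To^0$: because $\bA$ is connected, an element of a graded $\bD$-module is $\bA$-torsion if and only if it is killed by a power of the irrelevant ideal, if and only if it is supported at the vertex $0$, and a module entirely supported at $0$ is exactly one that restricts to $0$ on $Y=\overline{Y}\setminus\{0\}$. So $j^\ast$ factors as $\Phi\circ\pi^0_\bD$ for a functor $\Phi\colon\bD\Gr^0/\bD\To^0\to\sD_{[X]}\Qc$, and the theorem amounts to showing that $\Phi$ is an equivalence.

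For essential surjectivity, given a strongly equivariant $\sD_Y$-module $\mathcal{M}$, the pushforward $j_\ast\mathcal{M}$ is quasicoherent ($j$ is a quasi-compact separated open immersion into a Noetherian scheme), carries a $\sD_{\overline{Y}}$-module structure and a compatible strong $\bG_m$-structure, so $M:=\Gamma(Y,\mathcal{M})$ lies in $\bD\Gr^0$, and $j^\ast j_\ast\mathcal{M}\cong\mathcal{M}$ gives $\Phi(\pi^0_\bD M)\cong\mathcal{M}$. For full faithfulness one expands $\Hom_{\bD\mbox{\tiny\Gr}^0/\bD\mbox{\tiny\To}^0}(\pi^0_\bD M,\pi^0_\bD N)$ through $\omega^0_\bD$ as a colimit of $\Hom_{\bD\mbox{\tiny\Gr}^0}(M',N/N')$ over sub- and quotient modules with $\bA$-torsion cokernel and kernel, and compares it with $\Hom_{\sD_Y}(j^\ast M,j^\ast N)$ in its natural grading; since a $\sD_Y$-linear map is in particular $\sO_Y$-linear, this reduces to the $\sO$-level equivalence $\sO_{[X]}\Qc\simeq\bA\Gr/\bA\To$ already available, together with the remark that under it $\bD$-linearity matches $\sD_Y$-linearity because the two act through the same operators (Proposition~\ref{new_isom}). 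I expect this last part --- that restriction really realizes the full Serre quotient and that no $\sD$-linearity is lost on taking sections --- to be the main obstacle; it is exactly where the abstract section functor $\omega^0_\bD$, rather than any explicit saturation formula, is indispensable, and where one-variable phenomena such as $\bK[x]$ failing to be $\bA$-saturated are absorbed harmlessly into the quotient category.
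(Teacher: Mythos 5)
Your proposal follows essentially the same route as the paper. Both arguments identify strongly $\bG_m$-equivariant quasicoherent $\sD_{\overline{Y}}$-modules with $\bD\Gr^0$ (using that $\overline{Y}$ is affine, Proposition~\ref{new_isom}, and the strong-equivariance condition forcing the grading to agree with the $\vE$-eigenspace decomposition), and both use the restriction/sheafification functor to $Y$, with kernel $\bD\To^0$ because torsion modules are exactly those supported at the vertex, paired with global sections/pushforward over $Y$ as the quasi-inverse. The paper asserts the quasi-inverse relationship in one line; you spell it out as essential surjectivity via $j^\ast j_\ast\cong\mathrm{Id}$ plus a sketch of full faithfulness through $\omega^0_\bD$ and the Gabriel description of Hom in the quotient --- more verbose, but the underlying mechanism is the same. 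If you wanted to tighten the full-faithfulness step, the cleanest path is to note that $j_\ast j^\ast$ realizes the saturation $\omega^0_\bD\pi^0_\bD$ and then apply adjunction of $(\pi^0_\bD,\omega^0_\bD)$.
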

\begin{proof}
The category of D-modules on $\overline{Y}$ is just the category of
$D(\overline{Y}$)-modules 
since $\overline{Y}$ is affine.
The category of weakly $\bG_m$-equivariant D-modules on $\overline{Y}$
is $D(\overline{Y})\Gr$. The two actions of the Lie algebra of the
multiplicative group $\bG_m$ are given by the Euler element $\vE$ and by 
the grading. Thus, the category of strongly $\bG_m$-equivariant
D-modules on 
$\overline{Y}$
is the category of 0-Euler D-modules $D(\overline{Y})\Gr^0$.

By definition, the category 
$\sD_{[X]}\Qc$ is the category of strongly $\bG_m$-equivariant
D-modules on $Y$. Thus, taking sections on the open set $Y$ induces an exact functor
$$
\Gamma (Y, \underline{\hspace{3mm}}\;) : 
\sD_{[X]}\Qc \rightarrow 
D(Y)\Gr
$$
where $D(Y)$ is the ring of global differential operators
on $Y$. 
%
Proposition~\ref{new_isom} makes the global sections
$\Gamma (Y, \sM)$ into a graded $\bD$-module via the restriction map
$\bD\cong D(\overline{Y})\rightarrow D(Y)$.
This module is 0-Euler, because $\sM$ is strongly equivariant.
Thus, we obtain exact functors
$$
\Gamma (Y, \underline{\hspace{3mm}}\;):
\sD_{[X]}\Qc \rightarrow \bD\Gr^0
\ \ \ \mbox{ and }
$$
$$
\pi_\bD^0 \circ \Gamma (Y, \underline{\hspace{3mm}}\;):
\sD_{[X]}\Qc \rightarrow \bD\Gr^0/\bD\To^0.
$$
Let us examine the sheafification functor $\bD\Gr^0 \rightarrow \sD_{[X]}\Qc$.
The sheafification of an object in $\bD\To^0$ is supported at $0$. Hence 
objects in $\bD\To^0$ give the zero sheaf on $Y$. So it induces a functor on the quotient
$$
\widetilde{\;}: \bD\Gr^0/\bD\To^0 \rightarrow \sD_{[X]}\Qc
$$
which is quasiinverse to $
\pi_\bD^0 \circ \Gamma (Y, \underline{\hspace{3mm}}\;)$.
\end{proof}

An inquisitive reader may observe that
we have defined the category $\sD_{[X]}\Qc$
without defining the object $\sD_{[X]}$.
Later on we remedy this partially by constructing
an object $D_{[X]}^\lambda$
for each $\lambda \in \bK$
so that $\sD_{[X]} = \pi^0_\bD(D_{[X]}^0)$. 
Let us define 
{\em the category 
$\sD_{[X]}^\lambda\Qc$
of twisted D-modules on $[X]$}
as the quotient
$\bD\Gr^\lambda / \bD\To^\lambda$. It is possible to define the category
internally and then prove
a version of Theorem~\ref{DMod-q} but we see no value in doing it here.

Given a module $M$ in $\bD\Gr^\lambda$, we call $\omega^\lambda_\bD\pi^\lambda_\bD(M)$ the {\em $\bD^\lambda$-saturation} of $M$. We say that a module is {\em $\bD^\lambda$-saturated} is it is isomorphic to the $\bD^\lambda$-saturation of a module. It can be seen from the adjunction that a $\bD^\lambda$-saturated module is torsion-free and is isomorphic to its own saturation.

We shall prove now that an $\bA$-saturated $\lambda$-Euler $\bD$-module is automatically $\bD^\lambda$-saturated. This will make our forthcoming calculations easier.

\begin{lemma}
\label{}
Let $M$ be a $\lambda$-Euler $\bD$-module. 
Then the $\bD^\lambda$-saturation of $M$ is an $\bA$-submodule of its $\bA$-saturation.
\end{lemma}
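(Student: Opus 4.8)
The plan is to exhibit a canonical monomorphism of graded $\bA$-modules from the $\bD^\lambda$-saturation into the $\bA$-saturation, assembled from the units of the two localisation adjunctions. Write $S\coloneqq\omega^\lambda_\bD\pi^\lambda_\bD(M)$ for the $\bD^\lambda$-saturation and $\eta\colon M\to S$ for the unit of the adjunction $(\pi^\lambda_\bD,\omega^\lambda_\bD)$.

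First I would record the structure of $S$. Since $\omega^\lambda_\bD$ takes values in $\bD\Gr^\lambda$, the module $S$ is a $\lambda$-Euler $\bD$-module; restricting the $\bD$-action along the graded subalgebra inclusion $\bA\hookrightarrow\bD$ and using Lemma~\ref{Efield}, its canonical $\bZ$-grading $S_k=S^{k+\lambda}$ turns $S$ into a graded $\bA$-module. In the same way $M$ itself is a graded $\bA$-module, and its $\bA$-saturation $\omega_\bA\pi_\bA(M)$ is computed in $\bA\Gr$; the point will be to compare these two graded $\bA$-modules. Moreover $S$ is torsion-free over $\bA$, being $\bD^\lambda$-saturated.

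Next I would analyse $\eta$. By the standard properties of the localisation \cite{Ga}, $\eta$ is a $\bD\To^\lambda$-isomorphism: its kernel is the largest subobject of $M$ lying in $\bD\To^\lambda$, and its cokernel lies in $\bD\To^\lambda$. Since $\bD\To^\lambda$ is exactly the full subcategory of the $\lambda$-Euler $\bD$-modules that are torsion over $\bA$, and since the $\bA$-torsion submodule of a graded $\bD$-module is itself a graded $\bD$-submodule, the kernel of $\eta$ equals the $\bA$-torsion submodule $\tau(M)$ and its cokernel is $\bA$-torsion. Viewing $\eta$ as a morphism in $\bA\Gr$ and applying the exact functor $\pi_\bA$, which annihilates $\bA$-torsion modules, we obtain an isomorphism $\pi_\bA(M)\xrightarrow{\ \sim\ }\pi_\bA(S)$ in $\bA\Gr/\bA\To$; then $\omega_\bA$ yields an isomorphism of graded $\bA$-modules $\omega_\bA\pi_\bA(M)\cong\omega_\bA\pi_\bA(S)$. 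Finally, the $\bA$-unit $S\to\omega_\bA\pi_\bA(S)$ has kernel the $\bA$-torsion submodule of $S$, which vanishes by the previous paragraph; composing this injection with the isomorphism just obtained gives the desired embedding $S\hookrightarrow\omega_\bA\pi_\bA(M)$ of graded $\bA$-modules, which by construction is compatible with $\eta$ and the $\bA$-unit of $M$, so it genuinely realises the $\bD^\lambda$-saturation as an $\bA$-submodule of the $\bA$-saturation.

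The only delicate point is the bookkeeping in the third paragraph: a priori the $\bD$-localisation only detects $\bD$-submodules of $M$, so to identify $\ker\eta$ with the full $\bA$-torsion submodule $\tau(M)$ one must use that $\tau(M)$ happens to be a $\bD$-submodule, and that it is the largest $\bA$-torsion $\bD$-submodule. Everything else is formal manipulation with the adjunctions $(\pi_\bA,\omega_\bA)$ and $(\pi^\lambda_\bD,\omega^\lambda_\bD)$ together with the exactness of the quotient functors.
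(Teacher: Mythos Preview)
Your proof is correct and follows essentially the same line as the paper's. Both arguments use the unit $\eta\colon M\to S=\omega^\lambda_\bD\pi^\lambda_\bD(M)$, observe that its kernel and cokernel are $\bA$-torsion, deduce $\pi_\bA(M)\cong\pi_\bA(S)$, and then pass through the $(\pi_\bA,\omega_\bA)$-adjunction to obtain the map $\phi\colon S\to\omega_\bA\pi_\bA(M)$; your factorisation $S\to\omega_\bA\pi_\bA(S)\xrightarrow{\sim}\omega_\bA\pi_\bA(M)$ is exactly the paper's $\phi$ unpacked.

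The only noticeable difference is in the injectivity step. You argue that the $\bA$-unit $S\to\omega_\bA\pi_\bA(S)$ has kernel $\tau_\bA(S)$, and that this vanishes because $S$, being $\bD^\lambda$-saturated, is $\bA$-torsion-free (using the earlier-stated fact that the $\bA$-torsion part of a graded $\bD$-module is a $\bD$-submodule). The paper instead shows directly that $\Ke\phi$ is $\bA$-torsion, then proves by an order-filtration argument that $\bD\cdot\Ke\phi$ is still $\bA$-torsion, hence a torsion $\bD$-submodule of $S$, hence zero. This is the same underlying fact, just verified in situ rather than quoted. Your ``delicate point'' about identifying $\ker\eta$ with all of $\tau(M)$ is a little more than you need: it suffices that $\ker\eta$ and $\coker\eta$ lie in $\bD\To^\lambda$, which is immediate from the localisation formalism.
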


\begin{proof}
	We have a map $$M\rightarrow
        \omega^\lambda_\bD\pi^\lambda_\bD(M)$$ in $\bD\Gr^\lambda$
        \cite{AZ}. The kernel and cokernel of this map are torsion
which implies that 
	$$
	\pi_\bA(\omega^\lambda_\bD\pi^\lambda_\bD(M)) \cong \pi_\bA(M).
	$$
	From adjunction, this isomorphism is the image of a map in $\bA\Gr$,
	$$
	\phi:\omega^\lambda_\bD\pi^\lambda_\bD(M) \rightarrow \omega_\bA\pi_\bA(M).
	$$
    We claim that this map is injective. Since $\pi_\bA(\phi)$ is an isomorphism then $\Ke\phi$ is a torsion $\bA$-module. Consider $\bD\Ke\phi$ (which contains $\Ke\phi$), it is a left $\bD$-submodule of $\omega^\lambda_\bD\pi^\lambda_\bD(M)$. Take $m\in \Ke\phi$ then there exists an integer $N$ such that 
    $$
    \bA_{\geqslant N}m=0.
    $$
    For any $d\in \bD
    $
    of order $k$ we have 
    $$
    \bA_{\geqslant N+k}(dm)\leqslant \bD \bA_{\geqslant N}m=0.
    $$
    It follows that it is a torsion submodule of $\omega^\lambda_\bD\pi^\lambda_\bD(M)$ but $\omega^\lambda_\bD\pi^\lambda_\bD(M)$ is torsion-free. Hence $\Ke\phi=0$
\end{proof}

An immediate corollary is the following:
\begin{cor}
	Any $\bA$-saturated $\lambda$-Euler $\bD$-module is $\bD^\lambda$-saturated.
\end{cor}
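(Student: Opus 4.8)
The plan is to deduce the Corollary by sandwiching $M$ between two copies of its $\bA$-saturation. Let $M$ be an $\bA$-saturated $\lambda$-Euler $\bD$-module, and write $\eta : M \to \omega^\lambda_\bD\pi^\lambda_\bD(M)$ for the unit map in $\bD\Gr^\lambda$. As recalled in the proof of the Lemma, $\Ke\eta$ and $\coke\eta$ are torsion as $\bA$-modules. Since $M$ is $\bA$-saturated it is $\bA$-torsion-free, and $\Ke\eta$ is a $\bD$-submodule of $M$ which is $\bA$-torsion, hence $\Ke\eta = 0$; so $\eta$ exhibits $M$ as a $\bD$-submodule of its $\bD^\lambda$-saturation. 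The Lemma supplies an injective $\bA$-linear map $\phi : \omega^\lambda_\bD\pi^\lambda_\bD(M) \to \omega_\bA\pi_\bA(M)$ with $\pi_\bA(\phi)$ an isomorphism; and since $M$ is $\bA$-saturated the canonical map $\kappa : M \to \omega_\bA\pi_\bA(M)$ is an isomorphism. Thus we have $\bA$-linear maps $M \xrightarrow{\ \eta\ } \omega^\lambda_\bD\pi^\lambda_\bD(M) \xrightarrow{\ \phi\ } \omega_\bA\pi_\bA(M)$ with $\phi$ and $\kappa$ injective.

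The crux is to check that $\phi\circ\eta = \kappa$ (equivalently, that $\phi\circ\eta$ is an isomorphism). Both $\phi\circ\eta$ and $\kappa$ are maps from $M$ into the $\bA$-saturated module $\omega_\bA\pi_\bA(M)$, so by the adjunction $\Hom_{\bA\mbox{\tiny\Gr}}(M,\omega_\bA\pi_\bA(M)) \cong \Hom_{\bA\mbox{\tiny\Gr}/\bA\mbox{\tiny\To}}(\pi_\bA(M),\pi_\bA(M))$ they are determined by their images under $\pi_\bA$. Now $\pi_\bA(\kappa)$ is the identity, while $\pi_\bA(\eta)$ is an isomorphism (its kernel and cokernel are torsion) and, by the very construction of $\phi$ in the proof of the Lemma, $\pi_\bA(\phi)$ is the inverse of $\pi_\bA(\eta)$ up to the canonical identification; hence $\pi_\bA(\phi\circ\eta)$ is the identity as well. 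Therefore $\phi\circ\eta = \kappa$, and in particular $\phi\circ\eta$ is an isomorphism.

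It follows that $\phi$ is surjective, hence (being also injective) an isomorphism, and then $\eta = \phi^{-1}\circ\kappa$ is an isomorphism in $\bD\Gr^\lambda$. Thus $M \cong \omega^\lambda_\bD\pi^\lambda_\bD(M)$, i.e. $M$ is isomorphic to the $\bD^\lambda$-saturation of a module, which is exactly the assertion that $M$ is $\bD^\lambda$-saturated. The only step requiring a little care is the identification $\pi_\bA(\phi) = \pi_\bA(\eta)^{-1}$ (hence $\phi\circ\eta = \kappa$); this is a short diagram chase through the adjunctions and the explicit description of $\phi$ given in the Lemma, and everything else is formal — which is why the Corollary is indeed immediate.
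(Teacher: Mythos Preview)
Your proof is correct and is precisely the unpacking of why the corollary is, as the paper says, ``immediate'' from the Lemma (the paper gives no separate argument). The sandwich $M \xrightarrow{\eta} \omega^\lambda_\bD\pi^\lambda_\bD(M) \xrightarrow{\phi} \omega_\bA\pi_\bA(M) \cong M$ together with the verification via the adjunction that $\phi\circ\eta$ coincides with the unit $\kappa$ is exactly the content behind the word ``immediate''.
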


Let us give examples of objects in $\sD_{[X]}^\lambda\Qc$.
The sheaf $\sO_{[X]}(k)$ is an object in $\sD_{[X]}^k\Qc$. 
We introduce
$$
D_{[X]}^\lambda := \bD/\bD(\vE-\lambda).
$$
Another interesting object in $\sD_{[X]}^\lambda\Qc$ is
$$
\sD_{[X]}^\lambda := \pi^\lambda_\bD(D_{[X]}^\lambda).
$$
It plays the role of the sheaf of twisted differential operators,
although $D_{[X]}^\lambda$ is not an algebra because $\bD(\vE-\lambda)$
is not a two-sided ideal, in general.
However,  $\vE$ is a central element of $\bD_0$,
so
$$
{D_{[X]}^\lambda}_0 = \bD_0/\bD_0(\vE-\lambda)
$$
is an algebra. It
plays the role of the algebra of global sections
of the twisted differential operators on $[X]$.
${D_{[X]}^\lambda}$ is a $\bD-{D_{[X]}^\lambda}_0$-bimodule.

In the next section the adjoint functors
of global sections and localisation will play a role.
This adjoint pair  
$( \Gamma_\lambda , L_\lambda)$
is defined as:
$$ \Gamma_\lambda : \sD_{[X]}^\lambda\Qc \rightarrow {D_{[X]}^\lambda}_0\Mo, \ 
\Gamma_\lambda (\mathcal{M}) := \omega^\lambda_\bD(\mathcal{M})_0 = \omega^\lambda_\bD(\mathcal{M})^\lambda,
$$
$$ L_\lambda : {D_{[X]}^\lambda}_0\Mo \rightarrow \sD_{[X]}^\lambda\Qc, \ \ 
L_\lambda (N) := \pi^\lambda_\bD(D_{[X]}^\lambda\otimes_{{D_{[X]}^\lambda}_0} N).
$$

The ways we defined our global sections functors 
for $\sD_{[X]}^\lambda\Qc$ and $\sO_{[X]}\Qc$ are not necessarily
equivalent. 
Yet we know that 
$$
\Gamma_\lambda(\pi^\lambda_\bD(M)) \leqslant \Gamma(\pi_\bA(M))
$$
as $\bA$-modules for any $\lambda$-Euler $\bD$-module $M$.

The exposition would be greatly simplified if restricting the section functor $\omega_\bA$ to $\sD_{[X]}^\lambda\Qc$ were equivalent to $\omega^\lambda_\bD$. This explains why we have different global sections functor for different $\lambda$ although geometrically only one is needed. However, to ensure that we obtain $\lambda$-Euler $\bD$-modules and not just $\bA$-modules we use $\omega^\lambda_\bD$.

\section{D-modules on weighted projective space}

In this section we consider
$Y= V\setminus\{ 0\}$, the punctured vector space  
of dimension at least 2
and
$[X]=[Y/\bG_m]=[\bP(V)]$, the weighted projective 
stack.
In this case $I=\{0\}$,
$\bA=\bK[\vx_0, \ldots, \vx_n]$ where the degree of $\vx_i$ is $d_i>0$
and 
$\bD=\bK\langle \vx_0, \ldots, \vx_n ,\partial_0, \ldots, \partial_n\rangle$
is the Weyl algebra. 
Without loss of generality, we assume that
$0<d_0 \leq d_1 \leq \ldots \leq d_n$.

Let us look at the $\bD$-module $\Delta$ generated by the
delta-function at zero $\delta=\delta_0(\vx_0, \ldots, \vx_n )$
$$
\Delta
= \bD \delta
\cong \bD /
( \bD\vx_0 + \bD\vx_1 + \ldots + \bD\vx_n )\, . 
$$
The linear map 
$$
\bK
[\partial_0, \ldots, \partial_n ]
\rightarrow
\Delta, \ \ 
f (\partial_0, \ldots, \partial_n )
\mapsto
f (\partial_0, \ldots, \partial_n ) \cdot \delta
$$
is an isomorphism of 
vector spaces.
If we identify
$\bK
[\partial_0, \ldots, \partial_n ]
$
with
$
\Delta
$
using this linear map, then
$\partial_i$ acts by multiplication
and
$\vx_i$ acts by derivation
$\partial_j \mapsto - \delta_{i,j}$.
In particular, 
$$
\vE\cdot \delta=
\vE\cdot 1 =
\sum_j d_j\vx_j\cdot \partial_j
=\sum_j -d_j = 
-(\sum_j d_j)\delta
.$$
Hence, 
$\Delta$ is $k$-Euler for each integer $k$.
Its canonical $k$-Euler
grading is given by 
$$
\delta \in \Delta^{-\sum_j d_j} = \Delta_{-k - \sum_j d_j}, \ \ 
\partial_i\cdot\delta \in \Delta_{-k - d_i - \sum_j d_j}
.$$

Let $J=(\vx_0, \ldots, \vx_n) \lhd \bA$.
If $M$ is a $\bD$-module,
$\tau_\bA (M) = \{m\in M \,\mid\, \exists k \; J^km=0\}$ 
is its torsion $\bD$-submodule
(a reader can easily verify that if $J^km=0$, 
then $J^{k+1}\partial_im=0$).
The torsion $\bD$-modules are those, supported set theoretically on the
zero $0\in V$.
By Kashiwara's theorem, any $\bD$-module supported at $0$ 
is a direct sum of copies of $\Delta$.

Let us introduce some notations. Suppose that $M$ and $N$ are two $\bZ$-graded $\bA$-modules. We say that an $\bA$-module homomorphism $f:M \rightarrow N$ has \emph{degree} $l$ if $f(M_i) \subset N_{i+l}$ for all $i$. Denote by $\Hom(M,N)_l$ the set of all degree $l$ $\bA$-module homomorphisms and write
$$
\underline{\Hom}_\bA(M,N)=\bigoplus_{l\in \bZ} \Hom(M,N)_l.
$$
Now let $\Ext^q(M,N)_l$ be the derived functor of $\Hom(M,N)_l$ and write
$$
\underline{\Ext}^q_\bA(M,N)=\bigoplus_{l\in \bZ} \Ext^q(M,N)_l.
$$
Artin and Zhang prove \cite{AZ} that for any graded $\bA$-module $M$,
\begin{align*}
\tau_\bA(M) &\cong \underrightarrow{\lim} \; \underline{\Hom}_\bA(\bA/\bA_{\geqslant k},M),\\
R^{1}\tau_\bA(M) &\cong \underrightarrow{\lim}\; \underline{\Ext}^1_\bA(\bA/\bA_{\geqslant k},M)
\end{align*}
and that there exists a long exact sequence of $\bA$-modules
$$
	0  \rightarrow \tau_\bA(M) \rightarrow M \rightarrow \omega_\bA\pi_\bA(M) \rightarrow R^{1}\tau_\bA(M) \rightarrow 0
$$
where $\tau_\bA(M)$ and $R^{1}\tau_\bA(M)$ are torsion modules. This implies the following proposition.
\begin{prop}
	\label{Dsat}
	A $\lambda$-Euler $\bD$-module M is $\bD^\lambda$-saturated if it is torsion-free and $\underrightarrow{\lim} \; \underline{\Ext}^1(\bA/\bA_{\geqslant k},M) = 0$.
\end{prop}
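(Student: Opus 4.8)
\emph{Proof proposal.} The plan is to reduce to the Corollary proved just above, namely that any $\bA$-saturated $\lambda$-Euler $\bD$-module is $\bD^\lambda$-saturated. Thus it suffices to show that, under the stated hypotheses, the module $M$ is $\bA$-saturated, i.e.\ that the canonical map $M\to\omega_\bA\pi_\bA(M)$ is an isomorphism.

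The first (and only mildly technical) step is to identify the Artin--Zhang torsion functor $\tau_\bA$, defined for graded $\bA$-modules by $\tau_\bA(M)=\{m\mid \bA_{\geqslant k}m=0\ \text{for some}\ k\}$, with the $J$-torsion functor $\tau$ used earlier in this section. Since every $\vx_i$ has positive degree and $0<d_0\leqslant\cdots\leqslant d_n$, one has $J^k\subseteq\bA_{\geqslant kd_0}$; conversely, a monomial $\vx_0^{a_0}\cdots\vx_n^{a_n}$ of degree $\sum a_id_i\geqslant kd_n$ satisfies $\sum a_i\geqslant k$, hence lies in $J^k$, so $\bA_{\geqslant kd_n}\subseteq J^k$. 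Therefore $\tau_\bA=\tau$ on any $\bD$-module, and the hypothesis that $M$ is torsion-free gives $\tau_\bA(M)=0$.

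Now I would invoke the Artin--Zhang material quoted immediately before the proposition: the identification $R^1\tau_\bA(M)\cong\underrightarrow{\lim}\;\underline{\Ext}^1_\bA(\bA/\bA_{\geqslant k},M)$ together with the exact sequence
$$
0\to\tau_\bA(M)\to M\to\omega_\bA\pi_\bA(M)\to R^1\tau_\bA(M)\to 0.
$$
By assumption the last term vanishes, and by the previous step the first term vanishes, so the middle arrow $M\to\omega_\bA\pi_\bA(M)$ is an isomorphism. Hence $M$ is isomorphic to the $\bA$-saturation of $M$, i.e.\ $M$ is $\bA$-saturated, and the Corollary then yields that $M$ is $\bD^\lambda$-saturated. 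I expect no real obstacle here beyond being careful that "torsion-free" in the statement is read as torsion-freeness as an $\bA$-module — equivalently, as a $\bD$-module in the sense fixed above — which is exactly what the comparison $\tau_\bA=\tau$ provides; the remainder is a direct application of the Artin--Zhang exact sequence and the preceding Corollary.
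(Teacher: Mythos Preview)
Your proposal is correct and follows exactly the route the paper intends: the paper states the Artin--Zhang identifications and the exact sequence $0\to\tau_\bA(M)\to M\to\omega_\bA\pi_\bA(M)\to R^1\tau_\bA(M)\to 0$ and then simply writes ``This implies the following proposition'', leaving the reader to combine the vanishing of the outer terms with the earlier Corollary that $\bA$-saturated $\lambda$-Euler $\bD$-modules are $\bD^\lambda$-saturated. Your write-up spells this out, with the added (and welcome) care of checking that the two notions of torsion coincide; there is nothing to correct.
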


The next lemma will prove primordial in the proof that $\Gamma_\lambda L_\lambda \cong Id_{{D_{[X]}^\lambda}_0\mbox{\tiny{\Mo}}}$ for any $\lambda$ and $n\geqslant 2$. 

\begin{lemma}
	For $n \geqslant 2$, $D_{[X]}^\lambda$ is $\bD^\lambda$-saturated.
\end{lemma}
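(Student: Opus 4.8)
The plan is to verify the sufficient conditions of Proposition~\ref{Dsat} for $M=D_{[X]}^\lambda$: that it is a $\lambda$-Euler $\bD$-module, that it is torsion-free as an $\bA$-module, and that $\underrightarrow{\lim}\,\underline{\Ext}^1_\bA(\bA/\bA_{\geqslant k},D_{[X]}^\lambda)=0$. The $\lambda$-Euler property is quick: the class $\bar 1$ of $1$ generates $D_{[X]}^\lambda$ over $\bD$ and satisfies $\vE\cdot\bar 1=\lambda\bar 1$ because $\vE-\lambda=(\vE-\lambda)\cdot 1\in\bD(\vE-\lambda)$; by Lemma~\ref{Efield2} the module is then spanned by $\vE$-eigenvectors, and since $\vx_i$ and $\partial_i$ shift the eigenvalue by $\pm d_i\in\bZ$, all eigenvalues lie in $\lambda+\bZ$, so $D_{[X]}^\lambda=(D_{[X]}^\lambda)^{(\lambda)}$.

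For the remaining two conditions I would present $D_{[X]}^\lambda$ as the cokernel of an inclusion of free modules. By Lemma~\ref{Efield} the element $\vE-\lambda$ is homogeneous of degree $0$, and it is a nonzero element of the domain $\bD$, so right multiplication by $\vE-\lambda$ is an injective, degree-preserving endomorphism of $\bD$ as a graded left $\bD$-module. Hence there is a short exact sequence of graded left $\bD$-modules, and in particular of graded left $\bA$-modules,
$$0\longrightarrow\bD\xrightarrow{\ \cdot(\vE-\lambda)\ }\bD\longrightarrow D_{[X]}^\lambda\longrightarrow 0.$$
As a left $\bA$-module, $\bD=\bigoplus_{\beta}\bA\partial^\beta$ is free, each summand being a copy of $\bA$ up to a grading shift. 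Since $\bA/\bA_{\geqslant k}$ is finitely generated, each $\underline{\Ext}^q_\bA(\bA/\bA_{\geqslant k},-)$ commutes with arbitrary direct sums, and so does $\underrightarrow{\lim}_k$; therefore
$$\underrightarrow{\lim}\,\underline{\Ext}^q_\bA(\bA/\bA_{\geqslant k},\bD)\ \cong\ \bigoplus_{\beta}\,\underrightarrow{\lim}\,\underline{\Ext}^q_\bA\bigl(\bA/\bA_{\geqslant k},\bA\partial^\beta\bigr),$$
and the groups on the right compute, up to grading shifts, the local cohomology of the polynomial ring $\bA$ in $n+1$ variables with supports in its irrelevant ideal. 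This local cohomology vanishes in all degrees $q<n+1$; in particular it vanishes for $q=0,1,2$ once $n\geqslant 2$.

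Feeding the short exact sequence into the long exact sequence obtained by applying $\underrightarrow{\lim}\,\underline{\Ext}^\bullet_\bA(\bA/\bA_{\geqslant k},-)$, the segment in cohomological degrees $0$ and $1$ squeezes $\tau_\bA(D_{[X]}^\lambda)$ between two vanishing groups, so $D_{[X]}^\lambda$ is torsion-free; and the segment in degrees $1$ and $2$ squeezes $\underrightarrow{\lim}\,\underline{\Ext}^1_\bA(\bA/\bA_{\geqslant k},D_{[X]}^\lambda)$ between $\underrightarrow{\lim}\,\underline{\Ext}^1_\bA(\bA/\bA_{\geqslant k},\bD)=0$ and $\underrightarrow{\lim}\,\underline{\Ext}^2_\bA(\bA/\bA_{\geqslant k},\bD)=0$, so it too vanishes. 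Proposition~\ref{Dsat} then yields that $D_{[X]}^\lambda$ is $\bD^\lambda$-saturated. The hypothesis $n\geqslant 2$ is used precisely to force the $\underline{\Ext}^2$-term to vanish.

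The one step that needs genuine care is the interchange of $\underrightarrow{\lim}$, the graded $\Ext$ functors and the infinite direct sum $\bigoplus_\beta\bA\partial^\beta$, together with the identification of $\underrightarrow{\lim}\,\underline{\Ext}^\bullet_\bA(\bA/\bA_{\geqslant k},\bA)$ with local cohomology supported on $\bA_{\geqslant 1}$; for the latter one uses that the filtrations $\{\bA_{\geqslant k}\}_k$ and $\{(\bA_{\geqslant 1})^k\}_k$ are mutually cofinal because $\deg\vx_i\geqslant 1$ for all $i$. Everything else is routine bookkeeping with the long exact sequence.
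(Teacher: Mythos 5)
Your argument is correct, and it takes a genuinely different route from the paper's. The paper applies $\underline{\Hom}_\bA(-,D_{[X]}^\lambda)$ directly to the Koszul resolution of $\bA/\bA_{\geqslant 1}$ and kills $\Ke(\phi_1)$ and $\Ke(\phi_2)/\Ima(\phi_1)$ by an explicit, rather delicate, hands-on computation with monomials in the Weyl algebra under a \textbf{DegLex} ordering, followed by an induction on $k$ via the short exact sequences $0\to\bA_{\geqslant k}/\bA_{\geqslant k+1}\to\bA/\bA_{\geqslant k+1}\to\bA/\bA_{\geqslant k}\to 0$. You instead exploit the two-term free resolution $0\to\bD\xrightarrow{\cdot(\vE-\lambda)}\bD\to D_{[X]}^\lambda\to 0$ (valid since the Weyl algebra is a domain and $\vE-\lambda$ is homogeneous of degree $0$), restrict scalars to $\bA$ using the free decomposition $\bD=\bigoplus_\beta\bA\partial^\beta$, and reduce everything to the vanishing of graded local cohomology $H^q_{\mathfrak{m}}(\bA)$ for $q\leqslant 2<n+1$, then read off $\tau_\bA(D_{[X]}^\lambda)=0$ and $R^1\tau_\bA(D_{[X]}^\lambda)=0$ from the long exact sequence of $R^\bullet\tau_\bA$. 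The interchanges you flag (finite generation of $\bA/\bA_{\geqslant k}$ to pass $\underline{\Ext}$ through direct sums, exactness of $\underrightarrow{\lim}$, and cofinality of $\{\bA_{\geqslant k}\}$ with $\{(\bA_{\geqslant 1})^k\}$) are all legitimate. What your approach buys is conceptual clarity: it replaces several pages of monomial manipulations with a one-line appeal to local cohomology of a polynomial ring, and it makes transparent exactly why $n\geqslant 2$ is needed, namely to kill $H^2_{\mathfrak{m}}(\bA)$ (for $n=1$ this term is nonzero, which is consistent with the explicit counterexample $(-d_1\partial_1,d_0\partial_0)\in\Ke(\phi_2)\setminus\Ima(\phi_1)$ given in the paper after the lemma). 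What the paper's approach buys is self-containment: it requires no local cohomology and only the Artin--Zhang statements already quoted. Your explicit verification of the $\lambda$-Euler property of $D_{[X]}^\lambda$, which the paper leaves implicit, is also a welcome addition.
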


\begin{proof}
	Recall that $D_{[X]}^\lambda = \bD/\bD(\vE-\lambda)$. It is easier to compute Ext groups by taking a projective resolution of the left argument than an injective one of the right argument. Since $\bA/\bA_{\geqslant 1} \cong \bK$, the first three terms of the Koszul resolution are given by
	$$
	\ldots \rightarrow \bigoplus_{i_0 < i_1} \bA(-d_{i_0}-d_{i_1}) \rightarrow \bigoplus^n_{i=0} \bA(-d_i) \rightarrow \bA \rightarrow \bA/\bA_{\geqslant 1} \rightarrow 0.
	$$
	Take away $\bA/\bA_{\geqslant 1}$ and apply $\underline{\Hom}_\bA(\underline{\hspace{3mm}},D_{[X]}^\lambda)$ to the above exact sequence to get
	$$
	0 \rightarrow D_{[X]}^\lambda \overset{\phi_1}{\rightarrow} \bigoplus^n_{i=0} D_{[X]}^\lambda(d_i) \overset{\phi_2}{\rightarrow} \bigoplus_{i_0 < i_1} D_{[X]}^\lambda(d_{i_0}+d_{i_1}) \rightarrow  \ldots
	$$
	where 
	$$
	\phi_1\colon \overline m \mapsto (\vx_i\overline m)^n_{i=0}
	$$
	and
	$$
	\phi_2\colon (\overline m_i )_{i=0}^n \mapsto (\vx_{i_0}\overline m_{i_1}-\vx_{i_1}\overline m_{i_0})_{i_0<i_1}.
	$$
	It follows that
	\begin{align*}
	\underline{\Hom}_\bA(\bA/\bA_{\geqslant 1},D_{[X]}^\lambda) &\cong \Ke(\phi_1),\\
	\underline{\Ext}^1_\bA(\bA/\bA_{\geqslant 1},D_{[X]}^\lambda) &\cong \frac{\Ke(\phi_2)}{\Ima(\phi_1)}.
	\end{align*}
	Both $\underline{\Hom}_\bA(\bA/\bA_{\geqslant 1},D_{[X]}^\lambda)$ and $\underline{\Ext}^1_\bA(\bA/\bA_{\geqslant 1},D_{[X]}^\lambda)$ vanish.
	Let us first compute $\underline{\Hom}_\bA(\bA/\bA_{\geqslant 1},D_{[X]}^\lambda)$. Pick $\overline m \in \Ke(\phi_1)$, then $\vx_i\overline m=0$ for each $i$, where $$\overline m=m+\bD(\vE-\lambda).$$ We can assume $m$ to be homogeneous, so $$\vx_im=p_i(\vE-\lambda)$$ for some homogeneous $p_i \in \bD$. We want to show that $p_i\in \vx_i\bD$. Suppose, for a contradiction, that it is not. Then we can write
	$$
	p_i=\vx_i m' + \vf\partial^{\underline{\beta}} + LT
	$$
	where $m'\in \bD$, $\vf\in \bK[\vx_0,\ldots,\vx_n]$ is the
	highest term which is non-zero by assumption, free of $\vx_i$, $\underline{\beta}$ the biggest
	power and $LT$ are the lower terms using 
	{\bf DegLex} for the ordering of the monomials in $\partial$.
	Without loss of generality, we can assume that $i\neq0$. It follows that
	$$
	\vx_im =\vx_i m^{\prime\prime}+d_0\vf\vx_0\partial^{\underline{\beta}+\underline{e_0}} + LT
	$$ 
	since $\vf\partial^{\underline{\beta}}\vx_0\partial_0=\vf\vx_0\partial^{\underline{\beta}+\underline{e_0}} + LT$. But $\vf\vx_0$ is not divisible by $\vx_i$ and we obtain a contradiction. Thus, $$\underline{\Hom}_\bA(\bA/\bA_{\geqslant 1},D_{[X]}^\lambda)=0.
	$$
	
	Similarly, let us show that $\underline{\Ext}^1_\bA(\bA/\bA_{\geqslant 1},\bD_{[X]}^\lambda)$ vanishes. To proceed, choose $(\overline m_i)^n_{i=0}\in \Ke(\phi_2)$. Then for all $i,j$, there exists a $\theta_{ij}\in \bD$ such that
	$$
	\vx_im_j = \vx_jm_i + \theta_{ij}(\vE-\lambda).
	$$
	Write $$
	m_j=\vx_jm'_j + \vf\partial^{\underline{\beta}} + LT
	$$
	where $m'_j\in \bD$, $\vf\in \bK[\vx_0,\ldots,\vx_n]$ is the
	highest  term, free of $\vx_j$, $\underline{\beta}$ is the
	highest power and $LT$ are the lower terms using 
	{\bf DegLex} for the ordering of the monomials in $\partial$. Let us suppose, for the sake of a contradiction, that $|\underline{\beta}| \neq 0$. Then without loss of generality, we can assume that $\underline{\beta}$ is the lowest among all the possible representatives of $\overline m_j$.
	Write 
	$$
	\theta_{ij}=\vx_j\theta'+\vg \partial^{\underline{\gamma}} + LT
	$$
	where $\vg\in \bK[\vx_0,\ldots,\vx_n]$ is the highest term, 
	free of $\vx_j$. If $\vg=0$ then we are done. Suppose that $\vg \neq 0$ so that
	$$
	\vx_i\vx_jm'_j + \vx_i\vf\partial^{\underline{\beta}} + LT = \vx_j(m_i + \theta'(\vE-\lambda)) + \vg \partial^{\underline{\gamma}}(\vE-\lambda) + LT.
	$$
	Again without loss of generality, suppose that $i,j \neq 0$ as $n \geqslant 2$. By comparing the highest terms, free of $\vx_j$, we get
	$$
	\vx_i\vf\partial^{\underline{\beta}} = d_0\vg\vx_0\partial^{\underline{\gamma}+\underline{e_0}}
	$$
	with $|\underline \gamma| < |\underline \beta|$. Therefore,
	$$
	\vf\partial^{\underline{\beta}} = d_0\frac{\vg}{\vx_i}\vx_0\partial^{\underline{\gamma}+\underline{e_0}} \\
	= \frac{\vg}{\vx_i}\partial^{\underline{\gamma}}(\vE-\lambda) + LT.
	$$
	So $m_j-\frac{\vg}{\vx_i}\partial^{\underline{\gamma}}(\vE-\lambda)$ is another representative of $\overline m_j$ which has an index $\underline{\gamma}$ lower than $\underline{\beta}$, contrary to our hypothesis. Thus $\vg =0$ and
	$$
	m_j=\vx_jm'_j
	$$
	For all $i,j$, we have
	$$
	\vx_i\vx_jm'_j=\vx_i\vx_jm'_i + \theta_{ij}(\vE-\lambda)
	$$
	which implies that
	$$
	\vx_i\vx_j(m'_j-m'_i) \in \bD(\vE-\lambda).
	$$
	By using the first argument twice, we obtain that for all $i,j$
	$$
	m'_j-m'_i\in \bD(\vE-\lambda).
	$$
	Write
	$$
	\overline{m'} \coloneqq \overline{m'_j}=\overline{m'_i}
	$$
	for the residues of $m'_j$ and $m'_i$. Then for all $i$,
	$$
	\overline{m_i} = \vx_i\overline{m'}.
	$$
	Hence,
	$$\underline{\Ext}^1_\bA(\bA/\bA_{\geqslant 1},D_{[X]}^\lambda)=0.
	$$
	
	To finish our proof, for each $k$ we have a short exact sequence of graded $\bA$-modules:
	$$
	0  \rightarrow \bA_{\geqslant k}/\bA_{\geqslant k+1} \rightarrow \bA/\bA_{\geqslant k+1} \rightarrow \bA/\bA_{\geqslant k} \rightarrow  0
	$$
	and $\bA_{\geqslant k}/\bA_{\geqslant k+1}$ is isomorphic to a finite direct sum of copies of $\bA/\bA_{\geqslant 1}$.
	By applying $\underline{\Hom}_\bA(\underline{\hspace{3mm}},D_{[X]}^\lambda)$ to this short exact sequence and by induction on $k$, we conclude that for all $k$:
	\begin{align*}
	\underline{\Hom}_\bA(\bA/\bA_{\geqslant k},D_{[X]}^\lambda)&=0,\\
	\underline{\Ext}^1_\bA(\bA/\bA_{\geqslant k},D_{[X]}^\lambda)&=0.
	\end{align*}
	Taking direct limit \cite{AZ} it follows that 
	$$
	\tau_\bA(D_{[X]}^\lambda) = 0,
	\ \ \mbox{ and } \ \ 
	\underrightarrow{\lim} \; \underline{\Ext}^1(\bA/\bA_{\geqslant k},D_{[X]}^\lambda) = 0.
	$$
	Hence $D_{[X]}^\lambda$ is $\bD^\lambda$-saturated by Proposition~\ref{Dsat}.
\end{proof}
The condition on $n$ in the last proof is necessary. We can prove that $D_{[X]}^\lambda$ is not $\bD^\lambda$-saturated for all $\lambda$ when $n=1$ . For this, it suffices to notice that for $\lambda=0$,
$$
(-d_1\partial_1,d_0\partial_0)\in \Ke(\phi_2)
$$
but
$$
(-d_1\partial_1,d_0\partial_0)\notin \Ima(\phi_1)
$$
since $d_0\vx_0\partial_0=-d_1\vx_1\partial_1+\vE$.
\begin{lemma}
	 Let $n\geqslant2$. If $\Gamma_\lambda$ is exact then $\Gamma_\lambda L_\lambda \cong Id_{{D_{[X]}^\lambda}_0\mbox{\tiny{\Mo}}}$
\end{lemma}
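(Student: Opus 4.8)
The strategy is the standard argument that a left adjoint followed by a right adjoint is the identity provided the right adjoint is exact and sends the unit to an isomorphism. Concretely, for $N \in {D_{[X]}^\lambda}_0\Mo$ we must show the unit of the adjunction $N \to \Gamma_\lambda L_\lambda(N)$ is an isomorphism. Unwinding the definitions, $L_\lambda(N) = \pi^\lambda_\bD(D_{[X]}^\lambda \otimes_{{D_{[X]}^\lambda}_0} N)$, and $\Gamma_\lambda L_\lambda(N) = \omega^\lambda_\bD\pi^\lambda_\bD(D_{[X]}^\lambda \otimes_{{D_{[X]}^\lambda}_0} N)_0$, i.e.\ the degree-zero (equivalently $\lambda$-eigenspace) part of the $\bD^\lambda$-saturation of $M_N := D_{[X]}^\lambda \otimes_{{D_{[X]}^\lambda}_0} N$. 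So the crux is to identify $(\omega^\lambda_\bD\pi^\lambda_\bD(M_N))_0$ with $N$.

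The first step is the base case $N = {D_{[X]}^\lambda}_0$. Here $M_N = D_{[X]}^\lambda \otimes_{{D_{[X]}^\lambda}_0}{D_{[X]}^\lambda}_0 = D_{[X]}^\lambda$, which is $\bD^\lambda$-saturated by the preceding lemma (using $n \geq 2$); hence $\omega^\lambda_\bD\pi^\lambda_\bD(D_{[X]}^\lambda) \cong D_{[X]}^\lambda$, and its degree-zero part is exactly $(D_{[X]}^\lambda)_0 = \bD_0/\bD_0(\vE-\lambda) = {D_{[X]}^\lambda}_0 = N$. One checks the unit map is this identification. By taking direct sums — both $\pi^\lambda_\bD$ and $\omega^\lambda_\bD$ (the latter because $\bD\To^\lambda$ is closed under arbitrary direct sums, so $\omega^\lambda_\bD$ commutes with coproducts in the relevant range, or one argues degreewise) and $-\otimes_{{D_{[X]}^\lambda}_0}-$ commute with direct sums — the statement holds for all free ${D_{[X]}^\lambda}_0$-modules.

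The second step bootstraps from free modules to arbitrary $N$ using exactness of $\Gamma_\lambda$, which is the hypothesis. Pick a presentation of $N$ by free modules
$$
F_1 \longrightarrow F_0 \longrightarrow N \longrightarrow 0.
$$
Apply $L_\lambda$: since $-\otimes_{{D_{[X]}^\lambda}_0}-$ is right exact and $\pi^\lambda_\bD$ is exact, $L_\lambda$ is right exact, so $L_\lambda F_1 \to L_\lambda F_0 \to L_\lambda N \to 0$ is exact in $\sD_{[X]}^\lambda\Qc$. Now apply $\Gamma_\lambda$, which is exact by hypothesis (exactness in particular gives right-exactness, which is what is needed), to get an exact sequence $\Gamma_\lambda L_\lambda F_1 \to \Gamma_\lambda L_\lambda F_0 \to \Gamma_\lambda L_\lambda N \to 0$. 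This sits in a commutative ladder over $F_1 \to F_0 \to N \to 0$ whose left two vertical maps are the isomorphisms from Step 1. A diagram chase (the five-lemma in the form for right-exact sequences, i.e.\ cokernels) forces the right vertical map $N \to \Gamma_\lambda L_\lambda N$ to be an isomorphism, naturally in $N$. Hence $\Gamma_\lambda L_\lambda \cong Id$.

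The main obstacle is purely the bookkeeping that $\omega^\lambda_\bD$ behaves well with respect to the direct sums used to pass from $\bD$ to free ${D_{[X]}^\lambda}_0$-modules of arbitrary rank; one must confirm $\bD\To^\lambda$ being closed under arbitrary direct sums (already noted in the text) really does give $\omega^\lambda_\bD$ commuting with the coproducts in question, or else reduce to finitely generated $N$ first and handle filtered colimits separately. Everything else — the base case, right-exactness of $L_\lambda$, and the five-lemma chase — is formal given the lemma on $\bD^\lambda$-saturation of $D_{[X]}^\lambda$ and the exactness hypothesis on $\Gamma_\lambda$.
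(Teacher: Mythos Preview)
Your proposal is correct and follows essentially the same route as the paper: base case $N={D_{[X]}^\lambda}_0$ via the $\bD^\lambda$-saturation lemma for $D_{[X]}^\lambda$ (this is where $n\geqslant 2$ enters), extension to free modules via direct sums, then a free presentation and the four/five lemma using right exactness of $L_\lambda$ and exactness of $\Gamma_\lambda$. The one point you flag as an obstacle---that $\omega^\lambda_\bD$ commutes with the relevant direct sums---is handled in the paper by observing that $\bD\Gr^\lambda$ is locally noetherian and then invoking Gabriel's theorem that in this situation the section functor commutes with inductive limits; you should cite this rather than the weaker fact that $\bD\To^\lambda$ is closed under direct sums, which by itself does not suffice.
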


\begin{proof}
	Let $N$ be a ${D_{[X]}^\lambda}_0$-module. Take the first two terms of a free resolution of $N$
	$$
		P_1 \rightarrow P_0 \rightarrow N \rightarrow 0
	$$
	where $P_i=\bigoplus \limits_{j\in I_i} {D_{[X]}^\lambda}_0$ and $I_i$ is an index set. Since both ${D_{[X]}^\lambda} \otimes_{{D_{[X]}^\lambda}_0} \underline{\hspace{3mm}}$ and $\pi^\lambda_\bD$ are right exact functors, it follows that
	$$
		 \Gamma_\lambda L_\lambda(P_1) \rightarrow \Gamma_\lambda L_\lambda(P_0) \rightarrow \Gamma_\lambda L_\lambda(N) \rightarrow 0
	$$
	is exact. We can compute the first two terms explicitly:
	\begin{align*}
	\Gamma_\lambda L_\lambda(P_i) &=(\omega^\lambda_\bD\pi^\lambda_\bD({D_{[X]}^\lambda} \otimes_{{D_{[X]}^\lambda}_0} P_i))_0\\ &=(\omega^\lambda_\bD\pi^\lambda_\bD({D_{[X]}^\lambda} \otimes_{{D_{[X]}^\lambda}_0} \bigoplus \limits_{j\in I_i} {D_{[X]}^\lambda}_0))_0 \\
	&\cong(\omega^\lambda_\bD\pi^\lambda_\bD(\bigoplus \limits_{j\in I_i}{D_{[X]}^\lambda} \otimes_{{D_{[X]}^\lambda}_0} {D_{[X]}^\lambda}_0))_0 \\
	&\cong (\omega^\lambda_\bD\pi^\lambda_\bD(\bigoplus \limits_{j\in I_i} {D_{[X]}^\lambda}))_0
	\end{align*}
	since the tensor product commutes with arbitrary direct sums and that ${D_{[X]}^\lambda} \otimes_{{D_{[X]}^\lambda}_0} {D_{[X]}^\lambda}_0 \cong {D_{[X]}^\lambda}$.
	The category  $\bD\Gr^\lambda$ is locally noetherian \cite[Prop. 4.18]{El}. By a result of Gabriel, the section functor $\omega^\lambda_\bD$ commutes with inductive limits and, in particular, with arbitrary direct sums \cite[p. 379]{Ga}. Moreover, $\pi^\lambda_\bD$ is left adjoint to $\omega^\lambda_\bD$, so $\pi^\lambda_\bD$ commutes as well with arbitrary direct sums. This yields the following sequence of natural isomorphisms:
	\begin{align*}
		\Gamma_\lambda L_\lambda(P_i)
		&\cong (\omega^\lambda_\bD\pi^\lambda_\bD(\bigoplus \limits_{j\in I_i} {D_{[X]}^\lambda}))_0 \\
		&\cong (\bigoplus \limits_{j\in I_i} {\omega^\lambda_\bD\pi^\lambda_\bD (D_{[X]}^\lambda}))_0 \\
		&\cong (\bigoplus \limits_{j\in I_i} {D_{[X]}^\lambda})_0 \\
		&\cong \bigoplus \limits_{j\in I_i} {D_{[X]}^\lambda}_0 \\
		&\cong P_i
	\end{align*}
	since ${D_{[X]}^\lambda}$ is $\bD^\lambda$-saturated and that $(\underline{\hspace{3mm}})_0$ commutes with arbitrary direct sums. 
	Thus, we constructed a commutative diagram with exact rows:
	$$
	\xymatrix{
	&P_1 \ar[r] \ar[d]^\alpha &P_0 \ar[r] \ar[d]^\beta &\Gamma_\lambda L_\lambda(N) \ar[r] \ar[d]^\gamma &0 \ar[d] \\
	&P_1 \ar[r] &P_0 \ar[r] &N \ar[r] &0
	}
	$$
	where $\alpha$ and $\beta$ are isomorphisms, so $\Gamma_\lambda L_\lambda(N) \cong N$ is a natural isomorphism by the four lemma.
\end{proof}

\begin{theorem}
\label{DMod-eq}
Let $\sA$ be the $\bZ_{\geq 0}$-span of all $d_i$-s.
If $\lambda \in \bK \setminus (-\sum_i d_i -\sA)$, 
then the global sections functor
$\Gamma_\lambda : \sD_{[X]}^\lambda \Qc \rightarrow {D_{[X]}^\lambda}_0\Mo$
is exact.
In this case, $\Gamma_\lambda$ defines an equivalence between 
the quotient category
$\sD_{[X]}^\lambda \Qc / \Ke \Gamma_\lambda$ and ${D_{[X]}^\lambda}_0\Mo$.
\end{theorem}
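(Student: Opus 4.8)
The plan is to prove exactness first, and then deduce the equivalence from the two preceding lemmas by a formal argument.

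\smallskip
\noindent\emph{Exactness of $\Gamma_\lambda$.} By definition $\Gamma_\lambda$ is the composite of $\omega^\lambda_\bD\colon \sD_{[X]}^\lambda\Qc \to \bD\Gr^\lambda$ with the functor $(-)_0\colon \bD\Gr^\lambda \to {D_{[X]}^\lambda}_0\Mo$, $M\mapsto M_0=M^\lambda$. The second functor is exact; the first is left exact, being right adjoint to the exact $\pi^\lambda_\bD$ and satisfying $\pi^\lambda_\bD\omega^\lambda_\bD\cong Id$. So $\Gamma_\lambda$ is at least left exact. Given a short exact sequence $0 \to \mathcal M' \to \mathcal M \to \mathcal M'' \to 0$ in $\sD_{[X]}^\lambda\Qc$, apply $\omega^\lambda_\bD$: left exactness yields an exact sequence $0 \to \omega^\lambda_\bD\mathcal M' \to \omega^\lambda_\bD\mathcal M \overset{g}{\to} \omega^\lambda_\bD\mathcal M'' \to \Coke g \to 0$ in $\bD\Gr^\lambda$, and applying the exact $\pi^\lambda_\bD$ (using $\pi^\lambda_\bD\omega^\lambda_\bD\cong Id$ and that $\mathcal M \to \mathcal M''$ is epi) shows $\pi^\lambda_\bD(\Coke g)=0$, i.e. $\Coke g\in\bD\To^\lambda$. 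The whole point of the hypothesis on $\lambda$ is that an object of $\bD\To^\lambda$ has vanishing degree-zero part once $\lambda\notin -\sum_i d_i-\sA$: such an object is a torsion $\bD$-module, hence by Kashiwara's theorem a direct sum of copies of $\Delta$, and the computation already recorded, $\vE\cdot(\partial^{\underline{\beta}}\delta)=-\bigl(\sum_i d_i+\sum_j\beta_j d_j\bigr)\partial^{\underline{\beta}}\delta$, identifies the set of $\vE$-eigenvalues occurring in $\Delta$ with exactly the excluded set $-\sum_i d_i-\sA$. Hence for our $\lambda$ the $\lambda$-eigenspace of $\Delta$, and therefore of $\Coke g$, is zero. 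Applying the exact functor $(-)_0$ to the exact sequence above and using $(\Coke g)_0=0$ produces the short exact sequence $0 \to \Gamma_\lambda\mathcal M' \to \Gamma_\lambda\mathcal M \to \Gamma_\lambda\mathcal M'' \to 0$.

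\smallskip
\noindent\emph{The equivalence.} Since $\Gamma_\lambda$ is exact, $\Ke\Gamma_\lambda$ is a Serre subcategory of the Grothendieck category $\sD_{[X]}^\lambda\Qc$; it is closed under arbitrary coproducts because $\Gamma_\lambda$ commutes with them ($\omega^\lambda_\bD$ does by Gabriel's theorem, as recalled in the previous lemma, and $(-)_0$ plainly does), so $\Ke\Gamma_\lambda$ is localising. Thus $\sD_{[X]}^\lambda\Qc/\Ke\Gamma_\lambda$ is Grothendieck, the quotient functor $Q$ has a right adjoint $S$ with $QS\cong Id$, and $\Gamma_\lambda$ factors as $\Gamma_\lambda=\bar\Gamma_\lambda\circ Q$ with $\bar\Gamma_\lambda$ exact. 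Moreover $\bar\Gamma_\lambda$ reflects zero objects, since $\bar\Gamma_\lambda(QM)=\Gamma_\lambda M$ vanishes exactly when $M\in\Ke\Gamma_\lambda$, i.e. when $QM=0$; an exact functor that reflects zero objects is faithful, hence reflects isomorphisms. Set $\bar L_\lambda:=Q\circ L_\lambda$. Combining the adjunctions $L_\lambda\dashv\Gamma_\lambda$ and $Q\dashv S$ with the fact that the unit $M\to SQM$ has kernel and cokernel in $\Ke\Gamma_\lambda$ (and so induces an isomorphism after $\Gamma_\lambda$) shows $\bar L_\lambda\dashv\bar\Gamma_\lambda$. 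The unit of this adjunction is $Id\Rightarrow\bar\Gamma_\lambda\bar L_\lambda=\Gamma_\lambda L_\lambda$, which is an isomorphism by the preceding lemma; this is the step that uses $n\geq 2$, through the $\bD^\lambda$-saturation of $D_{[X]}^\lambda$. Hence $\bar L_\lambda$ is fully faithful, a triangle identity then forces $\bar\Gamma_\lambda$ applied to the counit to be an isomorphism, and since $\bar\Gamma_\lambda$ reflects isomorphisms the counit is itself an isomorphism. Therefore $\bar\Gamma_\lambda$ and $\bar L_\lambda$ are mutually quasi-inverse, giving the stated equivalence $\sD_{[X]}^\lambda\Qc/\Ke\Gamma_\lambda\simeq {D_{[X]}^\lambda}_0\Mo$.

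\smallskip
\noindent\emph{Where the difficulty lies.} The single genuine idea is the one in the first step: recognising $-\sum_i d_i-\sA$ as the set of $\vE$-eigenvalues of the delta-module $\Delta$, so that Kashiwara's classification of torsion $\bD$-modules kills the only possible obstruction to exactness. Everything after that is formal manipulation of the section and quotient functors, resting on the two lemmas already established (the $\bD^\lambda$-saturation of $D_{[X]}^\lambda$ and $\Gamma_\lambda L_\lambda\cong Id$). The point requiring the most care is verifying that the adjunction $L_\lambda\dashv\Gamma_\lambda$ descends to $\bar L_\lambda\dashv\bar\Gamma_\lambda$ on the quotient by $\Ke\Gamma_\lambda$.
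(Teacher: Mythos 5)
Your proof is correct and rests on the same two pillars as the paper's: the identification of the $\vE$-weights of $\Delta$ with $-\sum_i d_i-\sA$ (which, together with Kashiwara's theorem, gives exactness of $\Gamma_\lambda$), and the lemma $\Gamma_\lambda L_\lambda\cong Id$ (which requires $n\geq 2$ via the $\bD^\lambda$-saturation of $D_{[X]}^\lambda$). The exactness step is essentially identical to the paper's, except that the paper treats $\lambda\notin\bZ$ as a separate case (observing that no nonzero torsion module is $\lambda$-Euler then) whereas you fold it into the single observation that $\Delta^\lambda=0$; both are fine.

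Where you genuinely diverge is in the equivalence step. The paper works concretely: it takes the counit map $D_{[X]}^\lambda\otimes_{{D_{[X]}^\lambda}_0}\omega^\lambda_\bD(\mathcal M)_0\to\omega^\lambda_\bD(\mathcal M)$, fits it into a four-term exact sequence with kernel $K$ and cokernel $N$, applies the exact $\Gamma_\lambda\pi^\lambda_\bD$, and reads off directly that $\pi^\lambda_\bD(K)$ and $\pi^\lambda_\bD(N)$ lie in $\Ke\Gamma_\lambda$, so the map becomes an isomorphism in the quotient. You instead run a formal argument: exactness plus preservation of coproducts makes $\Ke\Gamma_\lambda$ localising; $\bar\Gamma_\lambda$ reflects zero objects, hence is faithful, hence reflects isomorphisms; the adjunction $L_\lambda\dashv\Gamma_\lambda$ descends to $\bar L_\lambda\dashv\bar\Gamma_\lambda$ because the unit $\mathcal M\to SQ\mathcal M$ has kernel and cokernel in $\Ke\Gamma_\lambda$; the unit of the descended adjunction is $\Gamma_\lambda L_\lambda\cong Id$, so $\bar L_\lambda$ is fully faithful; and the triangle identity together with conservativity of $\bar\Gamma_\lambda$ kills the counit. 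Your route is more abstract and arguably cleaner, since it reduces the whole equivalence to the single input $\Gamma_\lambda L_\lambda\cong Id$ plus localisation formalism, while the paper's route is more self-contained (it never needs to verify that $\Ke\Gamma_\lambda$ is localising or that the adjunction descends, because it just manipulates the explicit four-term sequence). Both are valid; you should just make sure to state $n\geq 2$ explicitly, as the paper's theorem statement also leaves it implicit from the ambient running hypothesis on $\dim V$.
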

\begin{proof}
The category  $\sD_{[X]}^\lambda\Qc$
is the quotient category of the category of $\lambda$-Euler modules
by the category of torsion modules. 
The canonical grading on a $\lambda$-Euler module
$M$ is given by
$M_k = M^{k+\lambda}$. 
The torsion modules are direct sums of
$\Delta$.
%
The global sections functor $\Gamma_\lambda$
is 
$$
\Gamma_\lambda : \mathcal{M} \mapsto 
\omega^\lambda_\bD(\mathcal{M})_0 = 
\omega^\lambda_\bD(\mathcal{M})^\lambda .
$$

We know that $\omega^\lambda_\bD$ is a left exact functor. Taking $\lambda$-eigenspaces is an exact functor, so we are left to prove that $\Gamma_\lambda$ is right exact. An epimorphism $f:\mathcal{M}\rightarrow \mathcal{N}$ induces the exact sequence
$$
\omega^\lambda_\bD(\mathcal{M}) \rightarrow \omega^\lambda_\bD(\mathcal{N}) \rightarrow \coke(\omega^\lambda_\bD(f)) \rightarrow 0
$$
where $\coke(\omega^\lambda_\bD(f))$ 
is a torsion $\bD$-module. Taking the zeroeth graded part, we get the exact sequence
$$
\Gamma_\lambda(\mathcal{M}) \rightarrow \Gamma_\lambda(\mathcal{N}) \rightarrow \coke(\omega^\lambda_\bD(f))_0 \rightarrow 0.
$$

Our restriction on $\lambda$ provides that  $\coke(\omega^\lambda_\bD(f))_0=0$.
Indeed, 
if $\lambda\not\in\bZ$, 
then $\coke(\omega^\lambda_\bD(f)) =0$. 
If $\lambda\in\bZ$, 
then $\coke(\omega^\lambda_\bD(f)) = \oplus \Delta$
and
$\coke(\omega^\lambda_\bD(f))_0 = 
\oplus \Delta^\lambda$.
Since
the $\vE$-weights of
$\Delta$
are $-\sum_i d_i-\sA$,
$\coke(\omega^\lambda_\bD(f))_0 = 0$.
Hence $\Gamma_\lambda$ is exact.

The kernel $\Ke \Gamma_\lambda$ is the full subcategory of
$\sD_{[X]}^\lambda \Qc$ whose objects are those $\mathcal{M}$ without non-trivial global
sections,
i.e., with $\Gamma_\lambda (\mathcal{M}) =0$.
Since $\Gamma_\lambda$ is exact, it is a Serre subcategory,
and $\Gamma_\lambda$ descends to a functor
$$
\widetilde{\Gamma}_\lambda : \sD_{[X]}^\lambda \Qc / \Ke \Gamma_\lambda
\rightarrow {D_{[X]}^\lambda}_0\Mo.
$$
and let 
$$
Q:\sD_{[X]}^{\lambda}\Qc \rightarrow \sD_{[X]}^\lambda \Qc / \Ke \Gamma_\lambda
$$
be the quotient functor. We claim that $QL_\lambda$ is a
quasiinverse of $\widetilde{\Gamma}_\lambda$. 
Now in one direction,
\begin{align*}
\widetilde{\Gamma}_\lambda(QL_\lambda)(N)
&=(\widetilde{\Gamma}_\lambda Q)L_\lambda (N) \\
&=\Gamma_\lambda L_\lambda (N) \\
&\cong N
\end{align*} 
since $\Gamma_\lambda$ is exact.
Thus, 
$$
\widetilde{\Gamma}_\lambda QL_\lambda \cong Id_{{D_{[X]}^\lambda}_0 \mbox{\tiny\Mo}}.
$$

In the opposite direction, we have a natural transformation
$$
QL_\lambda \widetilde{\Gamma}_\lambda \rightarrow Id_{\sD_{[X]}^\lambda \mbox{\tiny\Qc} /
  \mbox{\tiny\Ke} \Gamma_\lambda}.
$$
Take an object $\mathcal{\widetilde{M}}$ in ${\sD_{[X]}^\lambda \Qc /
\Ke \Gamma_\lambda}$. Then there exists an object $\mathcal{M}$ in $\sD_{[X]}^\lambda \Qc$ such that $\mathcal{\widetilde{M}}=Q(\mathcal{M})$. Hence,
\begin{align*}
QL_\lambda\widetilde{\Gamma}_\lambda(\mathcal{\widetilde{M}})
&= QL_\lambda\Gamma_\lambda(\mathcal{M})\\
&= Q\pi^\lambda_\bD({D_{[X]}^\lambda}\otimes_{{D_{[X]}^\lambda}_0} (\omega^\lambda_\bD(\mathcal{M}))_0).
\end{align*}
On a level of a $\lambda$-Euler module $M$ (with its canonical grading), 
the natural map
$$
{D_{[X]}^\lambda}\otimes_{{D_{[X]}^\lambda}_0} M_0 \rightarrow M
$$
gives rise to the long exact sequence
$$
0 \rightarrow K \rightarrow 
{D_{[X]}^\lambda}\otimes_{{D_{[X]}^\lambda}_0} M_0 \rightarrow M
\rightarrow N \rightarrow 0
$$
where $K$ is its kernel and $N$ is its cokernel.
Since $\pi^\lambda_\bD$ is exact,
$$
0 \rightarrow \pi^\lambda_\bD(K) \rightarrow 
\pi^\lambda_\bD({D_{[X]}^\lambda}\otimes_{{D_{[X]}^\lambda}_0} M_0) \rightarrow \pi^\lambda_\bD(M)
\rightarrow \pi^\lambda_\bD(N) \rightarrow 0
$$
is a long exact sequence as well.
If $M=\omega^\lambda_\bD(\mathcal{M})$, applying $\Gamma_\lambda$ yields
$$
0 \rightarrow \Gamma_\lambda\pi^\lambda_\bD(K) \rightarrow 
\omega^\lambda_\bD(\mathcal{M})_0 \rightarrow \omega^\lambda_\bD(\mathcal{M})_0
\rightarrow \Gamma_\lambda\pi^\lambda_\bD(N) \rightarrow 0
$$
since $\Gamma_\lambda\pi^\lambda_\bD(\omega^\lambda_\bD(\mathcal{M})) \cong \omega^\lambda_\bD(\mathcal{M})_0$ and $\Gamma_\lambda L_\lambda \cong Id_{{D_{[X]}^\lambda}_0 \mbox{\tiny\Mo}}$ when $\Gamma_\lambda$ is exact. The middle map $$\omega^\lambda_\bD(\mathcal{M})_0 \rightarrow \omega^\lambda_\bD(\mathcal{M})_0$$ is the identity map and hence an isomorphism. It follows that $\pi^\lambda_\bD(K)$ and $\pi^\lambda_\bD(N)$ are objects in $\Ke(\Gamma_\lambda)$. Therefore,
$$
\pi^\lambda_\bD({D_{[X]}^\lambda}\otimes_{{D_{[X]}^\lambda}_0} \omega^\lambda_\bD(\mathcal{M})_0) \rightarrow \pi^\lambda_\bD(\omega^\lambda_\bD(\mathcal{M}))
$$
is an isomorphism in ${\sD_{[X]}^\lambda \Qc / \Ke \Gamma_\lambda}$
and
\begin{align*}
QL_\lambda\widetilde{\Gamma}_\lambda(\mathcal{\widetilde{M}})
&\cong Q\pi^\lambda_\bD(\omega^\lambda_\bD(\mathcal{M})) \\
&\cong Q(\mathcal{M}) \\
&\cong \mathcal{\widetilde{M}}.
\end{align*}
It follows that 
$QL_\lambda \widetilde{\Gamma}_\lambda \cong I_{\sD_{[X]}^\lambda \mbox{\tiny\Qc} /
  \mbox{\tiny\Ke} \Gamma_\lambda}$.
\end{proof}

We are left to study when $\Ke \Gamma_\lambda$ is a zero category so that $\Gamma_\lambda$ defines an equivalence between 
the quotient category
$\sD_{[X]}^\lambda \Qc$ and ${D_{[X]}^\lambda}_0\Mo$.

\begin{lemma}
	Suppose that $\lambda \in \bZ\setminus\sA$ or that the greatest common divisor $\gcd_{i} (d_i) \neq 1$. Then $\Ke\Gamma_\lambda \neq 0$.
\end{lemma}

\begin{proof}
	If $k\in\bZ$,
	then $\sO_{[X]}(k)=\pi^\lambda_\bD(\bA[k])$
	is a non-zero $\bD^k$-saturated (since it is $\bA$-saturated \cite{AZ}) object of
	$\sD_{[X]}^k \Qc$
	because $1\in \bA_0=\bA[k]_{-k}$
	and
	$$\vE\cdot 1 =0 = (-k+k)1.$$
	The global sections
	$$\Gamma_k (\sO_{[X]}(k)) = \bA[-k]_0 = \bA_k$$ are non-zero if and only if
	$k\in \sA$. Thus, if $\lambda\in \bZ\setminus\sA$, then
	$\sO_{[X]}(\lambda)$
	is a non-zero object of
	$\Ke\Gamma_\lambda$.
	
	Now let us assume that the greatest common divisor
	$d$  of $d_0,...,d_n$ is greater than $1$. 
	It easily follows that $$\bD_1=\bD_2=\ldots  =\bD_{d-1}=0.$$
	Let $M$ be the $\bK$-vector space with a basis
	of all formal monomials $\vx_0^{a_0}\ldots \vx_n^{a_n}$,
	$a_i\in\bK$. 
	It is a $\bD$-module under the following operations, defined 
	on the monomials by
	\begin{align*}
	\vx_i \cdot \vx_0^{a_0}\ldots \vx_n^{a_n}
	&= \vx_0^{a_0}\ldots
	\vx_i^{1+a_i}\vx_{i+1}^{a_{i+1}}\ldots \vx_n^{a_n}, \\
	\partial_i \cdot \vx_0^{a_0}\ldots \vx_n^{a_n}
	&= a_i \vx_0^{a_0}\ldots
	\vx_i^{-1+a_i}\vx_{i+1}^{a_{i+1}}\ldots \vx_n^{a_n}.
	\end{align*}
	Given $\lambda \in \bK$, we consider the $\bD$-submodule
	$N=\bD\vx_0^{(\lambda -1)/d_0}$. Since
	$$
	\vE \cdot \vx_0^{(\lambda -1)/d_0} = 
	d_0\vx_0\partial_0 \cdot \vx_0^{(\lambda -1)/d_0} = 
	(\lambda -1)\vx_0^{(\lambda -1)/d_0},
	$$
	the module $N$ is $\lambda$-Euler and
	$\vx_0^{(\lambda -1)/d_0} \in N^{\lambda -1} = N_{-1}$ in the 
	canonical $\lambda$-Euler
	grading. Put $\mathcal{N}=\pi^\lambda_\bD(N)$. By definition, $N$ is torsion-free. Denote by $\tau^\lambda_\bD$ the restriction of $\tau_\bA$ to $\bD\Gr^\lambda$. The long exact sequence \cite{AZ}
	$$
	0  \rightarrow \tau^\lambda_\bD(N) \rightarrow N \rightarrow \omega^\lambda_\bD\pi^\lambda_\bD(N) \rightarrow R^{1}\tau^\lambda_\bD(N) \rightarrow 0
	$$
	reduces to the short exact sequence
	$$
	0  \rightarrow N \rightarrow \omega^\lambda_\bD\pi^\lambda_\bD(N) \rightarrow R^{1}\tau^\lambda_\bD(N) \rightarrow 0.
	$$
	
	But $R^{1}\tau^\lambda_\bD(N)$ is a torsion $\bD$-module, hence it is a direct sum of copies of $\Delta$. The $\vE$-weights of $N$ are congruent to $-1$ modulo $d$ and the $\vE$-weights of the module $\Delta$ are congruent to $0$ modulo $d$. It follows that the short exact sequence splits and 
	$$
	\omega^\lambda_\bD\pi^\lambda_\bD(N) \cong N \oplus R^{1}\tau^\lambda_\bD(N).
	$$
	Since $\omega^\lambda_\bD\pi^\lambda_\bD(N)$ is torsion free, $\omega^\lambda_\bD\pi^\lambda_\bD(N) \cong N$ and $R^{1}\tau^\lambda_\bD(N)=0$. This means that $N$ is $\bD^\lambda$-saturated and 
	$$
	\Gamma_\lambda(\mathcal{N})=N_0=0.
	$$
	Hence, 
	$\mathcal{N}$ is 
	a non-zero object in $\Ke \Gamma_\lambda$.
	\end{proof}

In all the other cases the kernel is trivial.

\begin{lemma}
	\label{DMod-ker}
	Let us assume that the greatest common divisor
	$\gcd_{i} (d_i)$ is equal to $1$. 
	If 
	$\lambda \in (\bK \setminus \bZ) \cup \sA$, 
	then $\Ke \Gamma_\lambda$ is a zero category.
\end{lemma}
\begin{proof}
	Let 
	$m$ be the least common multiple of
	$d_{0}, \ldots, d_{n}$. Suppose that $\mathcal{M}$ is a non-zero object in $\sD_{[X]}^\lambda-\Qcoh$. Then $M:=\omega^\lambda_\bD(\mathcal{M})$ is a non-zero $\lambda$-Euler torsion-free $\bD$-module.
	We need to show that $M_0\neq 0$.
	Let us suppose that the contrary is true, i.e., $M_0= 0$.
	We proceed to arrive at a contradiction 
	via 
	a sequence of claims.
	\begin{claim}
		$M_{-mt}=0$ for any $t\in\bZ_{>0}$.
	\end{claim}
	\begin{subproof}
		If $a\in M_{-mt}$, then $\vx_{i}^{mt/d_{i}}\cdot a=0$ for 
		all
		$i=0,\ldots, n$
		since it is an element of $M_{0}$. Hence,  
		$a$ generates a torsion $\bD$-submodule of $M$ but $M$ is torsion-free. Hence $a=0$.
		\end{subproof}
	\begin{claim}
		$M_{-mt+kd_{i}}=0$ for all $i$
		and
		$0\leqslant k\leqslant\frac{mt}{d_{i}}$.
		In particular, $M_{-kd_{i}}=0$
		for all
		$k\geqslant0$.
	\end{claim}
	\begin{subproof}
		We proceed by induction. The case $k=0$ is Claim~1. 
		Assume that this is true for $k$, and let us prove it for $k+1$. 
		If $-mt+\left(k+1\right)d_{i}=0$, 
		then we are done. 
		Otherwise, let us pick a non-zero element 
		$a\in M_{-mt+\left(k+1\right)d_{i}}$. It follows that 
		$$\partial_{i}\cdot a\in M_{-mt+kd_{i}}$$
		which is zero by induction. 
		Moreover, $\vx_{i}^{-\left(k+1\right)+mt/d_i}\cdot a \in M_{0}$
		which is zero again. 
		Since 
		$$
		\left[\partial_{i},\vx_{i}^{-\left(k+1\right)+mt/d_i}\right]=
		\left(\frac{mt}{d_{i}}-\left(k+1\right)\right)
		\vx_{i}^{-\left(k+2\right)+mt/d_i},
		$$
		we conclude that
		$\vx_{i}^{-\left(k+2\right)+mt/d_i}\cdot a=0$.
		We can repeat this argument to conclude that
		$\vx_{i}^{-\left(k+l\right)+mt/d_i}\cdot a=0$
		for all positive $l$ with 
		$\frac{mt}{d_{i}}-\left(k+l\right)\geq 0$.
		In particular, $a = \vx_i^0\cdot a =0$.
		\end{subproof}
	\begin{claim}
		If $c_{0},...,c_{k}$
		are positive integers and
		$g$ is their greatest common divisor, 
		then
		there exist integers
		$r_{0}\leqslant0$, and
		$r_{1},\ldots, r_{k}\geqslant0$ 
		such that
		$r_{0}c_{0}+\ldots +r_{k}c_{k}=g$.
	\end{claim}
	\begin{subproof}
		Let $l$ be the least common multiple
		of $c_{0},...,c_{k}$.
		By the Euclidean
		algorithm 
		there exist integers
		$s_{0},\ldots, s_{k}$
		such that
		$$
		s_{0}c_{0}+\ldots +s_{k}c_{k}=1.
		$$
		Now we can add $-\frac{l}{c_0}c_0+\frac{l}{c_i}c_i=0$
		for various $i$ to this relations to get integers 
		$r_{0},...,r_{k}$ 
		such that
		$$
		r_{0}c_{0}+\ldots +r_{k}c_{k}=1
		$$
		and 
		$r_{1}, \ldots, r_{k}\geqslant0$.
		Inevitably,  $r_{0}\leqslant0$.
		\end{subproof}
	\begin{claim}
		For all integer $b_{0},\ldots, b_{l}\geqslant0$,
		$M_{-\left(b_{0}d_{0}+\ldots +b_{l}d_{l}\right)}=0$.
	\end{claim}
	\begin{subproof}
		We proceed by induction on $l$. The base case $l=0$ is Claim 2.
		Assume this is true for $l-1$. In particular, it is true if $b_i=0$
		for some $i$.
		
		Let $g_{l}=\gcd\left(d_{0},\ldots ,d_{l}\right)$ and fix a positive integer $k$.
		Consider a non-zero element $a\in M_{-kg_l}$.
		There exist 
		positive integers $c_0,c_1,\ldots, c_l$
		such that 
		$$\partial_{0}^{c_{0}}\cdot a=\partial_{1}^{c_{1}}\cdot a=\ldots=
		\partial_{l}^{c_{l}}\cdot a=0.$$
		Indeed, by Claim 3, there exist 
		$r_{i}\leqslant0$
		and $r_{0}, \ldots, r_{i-1},r_{i+1}, \ldots r_{l}\geqslant0$ 
		such that 
		$$
		r_{0}d_{0}+\ldots +r_{l}d_{l}=g_{l}
		$$
		Now if $c_{i}=-kr_{i}\geqslant0$, then 
		$$
		\partial_{i}^{c_{i}}\cdot a \in
		M_{-c_{i}d_{i}-kg_l}=
		M_{-k(r_{0}d_{0}+\ldots +r_{i-1}d_{i-1}+r_{i+1}d_{i+1}+\ldots +r_{l}d_{l})}
		=0,
		$$
		by induction.
		Let us consider the Weyl algebra 
		$$
		\widetilde{\bD} = 
		\bK \langle \vx_{0},\ldots, \vx_{l}, \partial_0, \ldots,\partial_l \rangle
		$$
		and its polynomial subalgebra
		$\widetilde{\bA} = \bK \left[\partial_0, \ldots , \partial_l\right]$.
		The $\widetilde{\bA}$-module $\widetilde{\bD} a$ is supported at zero, hence,
		it must be a direct sum of copies of 
		$\widetilde{\Delta} = \widetilde{\bD} \delta
		(\partial_0, \ldots, \partial_l) \cong
		\bK \left[\vx_0, \ldots ,\vx_l\right]$.
		It follows that 
		$$\vx_{0}^{b_{0}}\ldots \vx_{l}^{b_{l}}\cdot a\neq0
		\ 
		\mbox{ for all }
		\
		b_{0},\ldots , b_{l}\geqslant0
		.$$
		We want to determine for which $k$, we can find
		$b_{0},\ldots , b_{l}\geqslant0$
		such that 
		$\vx_{0}^{b_{0}}\ldots \vx_{l}^{b_{l}}\cdot a \in M_{0}=0$.
		We get a contradiction and hence $M_{-kg_l}=0$ for such $k$. 
		The condition is that 
		$$b_{0}d_{0}+\ldots +b_{l}d_{l}=kg_l,$$ 
		i.e. $kg_l\in\bZ_{\geqslant0}d_{0}+\bZ_{\geqslant0}d_{1}+\ldots +\bZ_{\geqslant0}d_{l}$.
		\end{subproof}
	
	In particular, it is true for $l=n$, i.e., $M_{-k}=0$ for all $k\in\sA$.
	Now let us finish the proof of the theorem. By Schur's Theorem 
	there exists\footnote{
		The smallest such $K$ is called the Frobenius number. 
		It is a NP-hard problem to find such $K$.
		There is no known
		closed formula that gives $K$ as a function of $d_{0},...,d_{n}$
		for $n\geqslant2$.}
	$K\geqslant0$ such that 
	$k\in \sA$
	for all $k> K$, in particular,
	$M_{-k}=0$
	for all $k> K$.
	Thus, $M$ is supported at zero as a
	$\bK \left[ \partial_0, \ldots  \partial_n \right]$-module.
	By Kashiwara's Theorem $M$ is a direct sum of copies of $\bA=\bK \left[ \vx_0, \ldots  \vx_n \right]$.
	If $\lambda\in\bK\setminus\bZ$ then $\bA$ is not $\lambda$-Euler.
	Thus, $M=0$.
	Finally, 
	if $\lambda\in\bZ$ then $\bA$ is $\lambda$-Euler.
	Moreover, as a graded module 
	$M$ is a direct sum of copies of $\bA[\lambda]$.
	Observe that $\bA[\lambda]_0=\bA_\lambda\neq 0$ if and only if 
	$\lambda\in\sA$.
	Thus, if $\lambda\in\sA$, then $M=0$ as well.
	\end{proof}

Combining the last two claims, we obtain a characterisation of the kernel of the global sections functor.
\begin{theorem}
	The greatest common divisor
	$\gcd_{i} (d_i)$ is equal to $1$ and 
	$\lambda \in (\bK \setminus \bZ) \cup \sA$ if and only if 
	$\Ke \Gamma_\lambda$ is a zero category.
\end{theorem}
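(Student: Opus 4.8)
The plan is to show that $\Gamma_\lambda$ is faithful. Because $(\bK\setminus\bZ)\cup\sA$ is disjoint from $-\sum_id_i-\sA$, our $\lambda$ lies outside $-\sum_id_i-\sA$, so by Theorem~\ref{DMod-eq} the functor $\Gamma_\lambda$ is exact; for an exact functor, faithfulness is the same as $\Ke\Gamma_\lambda=0$, which is what we want. So take a nonzero object $\mathcal{M}$ of $\sD_{[X]}^\lambda\Qc$ and set $M=\omega^\lambda_\bD(\mathcal{M})$, a nonzero $\bD^\lambda$-saturated $\lambda$-Euler $\bD$-module --- in particular torsion-free --- with $\Gamma_\lambda(\mathcal{M})=M_0=M^\lambda$; assuming $M^\lambda=0$, I want a contradiction. (As $\Ke\Gamma_\lambda$ is a Serre subcategory closed under subobjects, I may shrink $\mathcal{M}$ to a nonzero coherent subobject, so that $M$ is the $\bD^\lambda$-saturation of a finitely generated $\bD$-module.)

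First I would prove the key lemma: \emph{$M$ has no nonzero element annihilated by all of $\partial_0,\dots,\partial_n$.} Suppose $m\ne0$ with $\partial_im=0$ for every $i$. Then $\{p\in\bA:pm=0\}$, the annihilator of $m$ in $\bA$, is an ideal stable under each $\partial_i$ (Leibniz rule, using $\partial_im=0$), hence a $\bD$-submodule of the polynomial $\bD$-module $\bA=\bK[\vx_0,\dots,\vx_n]$. That module is simple, so this annihilator is $0$ (it cannot be all of $\bA$, else $m=0$); thus $p\mapsto pm$ is an isomorphism of graded $\bD$-modules $\bA\xrightarrow{\ \sim\ }\bA m\subseteq M$, and $m$ has $\vE$-weight $0$. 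Then $0\in\lambda+\bZ$, so $\lambda\in\bZ$, so by hypothesis $\lambda\in\sA$; writing $\lambda=\sum_ia_id_i$ with $a_i\in\bZ_{\ge0}$, the element $\vx_0^{a_0}\cdots\vx_n^{a_n}m$ is nonzero in $\bA m$ and has $\vE$-weight $\lambda$, i.e.\ $0\ne\vx_0^{a_0}\cdots\vx_n^{a_n}m\in M^\lambda=M_0=0$, a contradiction. (When $\lambda\notin\bZ$ the lemma is immediate: such an $m$ would have $\vE$-weight $0\notin\lambda+\bZ$.) Consequently $(\partial_0,\dots,\partial_n)\colon M\hookrightarrow M^{\oplus(n+1)}$ is injective, so $M_k\hookrightarrow\bigoplus_iM_{k-d_i}$ for each $k$ and the set $\{k:M_k\ne0\}$ has no least element; dually, torsion-freeness makes $(\vx_0,\dots,\vx_n)\colon M\hookrightarrow M^{\oplus(n+1)}$ injective, so $\{k:M_k\ne0\}$ has no greatest element. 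Hence a nonzero such $M$ is unbounded in both directions and misses degree $0$.

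The remaining step, concluding $M=0$ from this, is the one I expect to be the main obstacle, and it is the only point where the full strength of the hypothesis $\gcd_i(d_i)=1$ (together with $\lambda\in(\bK\setminus\bZ)\cup\sA$, beyond the exactness already used) is genuinely needed: for $\gcd_i(d_i)=d>1$ the module $\bD\vx_0^{(\lambda-1)/d_0}$ constructed above has all the properties derived so far and yet is nonzero with vanishing degree-$0$ part. I would finish by localising on the standard charts: cover $[X]$ by the open substacks $U_i=[\{\vx_i\ne0\}/\bG_m]$, each of which is the quotient of a smooth affine variety by the finite group $\mu_{d_i}$ and hence D-affine, so that a nonzero (twisted) $\sD_{U_i}$-module has nonzero sections. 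Since $\mathcal{M}\ne0$, it restricts to a nonzero module on some $U_{i_0}$, which therefore has a nonzero section. A Čech/excision induction over this cover propagates nonvanishing of sections from $U_{i_0}$ to all of $[X]$: the correction terms are graded components of $\sD$-modules supported on the loci $\{\vx_i=0\}$ and ultimately on $\{0\}$, hence direct sums of copies of $\Delta$, whose $\vE$-weights all lie in $-\sum_id_i-\sA$; with $\gcd_i(d_i)=1$ and $\lambda\in(\bK\setminus\bZ)\cup\sA$ the component in the relevant degree vanishes, exactly as in the exactness argument of Theorem~\ref{DMod-eq}. Therefore $\Gamma_\lambda(\mathcal{M})\ne0$, the desired contradiction, and $\Ke\Gamma_\lambda=0$.
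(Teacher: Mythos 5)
Your reduction and your key lemma are both correct. The lemma (no nonzero $m\in M$ is annihilated by all $\partial_i$, proved by noting the annihilator of $m$ in $\bA$ is a $\bD$-submodule of the simple module $\bA$, then extracting the $\vE$-weight of $m$ and feeding it through the hypothesis on $\lambda$) is a clean observation not stated in the paper, and it is morally the ``base case'' that the paper's argument also exploits. But it only tells you the set $\{k:M_k\neq0\}$ misses $0$ and is unbounded in both directions, which is not yet contradictory; you recognise this. The genuine gap is the proposed Čech/excision finish, which does not work as described. The local-cohomology ``correction terms'' obtained by removing $\{\vx_{i_0}=0\}$ from $[X]$ are supported on the hyperplane substack $\{\vx_{i_0}=0\}\cap Y$ \emph{inside} $[X]$ --- a positive-dimensional substack --- not at the origin (which is not a point of $Y$ at all, having been removed). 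There is no reason those modules are direct sums of $\Delta$, no reason their $\vE$-weights lie in $-\sum_id_i-\sA$, and no mechanism by which iterating over the charts would push the support ``ultimately to $\{0\}$'' within the stack. So the propagation of nonvanishing sections from $U_{i_0}$ to $[X]$ is unjustified.

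The paper closes the gap in a fundamentally different way, by a direct graded/combinatorial induction inside $\bD\Gr$ rather than by geometry on $[X]$: starting from $M_0=0$ and torsion-freeness, one shows $M_{-mt}=0$ for $m=\operatorname{lcm}(d_i)$, then $M_{-kd_i}=0$ for all $k\geqslant0$, and then by induction on the number of variables (using Kashiwara's theorem for the sub-Weyl algebras $\bK\langle\vx_0,\ldots,\vx_l,\partial_0,\ldots,\partial_l\rangle$ and a Bezout-type claim with signs controlled) that $M_{-k}=0$ for every $k\in\sA$. Schur's theorem on numerical semigroups (which is exactly where $\gcd_i(d_i)=1$ enters) then gives $M_{-k}=0$ for all $k\gg0$, so $M$ is supported at $0$ as a $\bK[\partial_0,\ldots,\partial_n]$-module; Kashiwara's theorem forces $M$ to be a direct sum of copies of $\bA$, and the $\vE$-weight bookkeeping (very close in spirit to the end of your key lemma) yields the contradiction. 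If you want to salvage your approach, the content you are missing is precisely this semigroup-and-Kashiwara induction replacing the Čech step.
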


Together with Theorem~\ref{DMod-eq} this gives the following
corollaries.
\begin{cor}
Let us suppose that 
$\lambda\in (\bK\setminus\bZ) \cup\sA$
and
$\gcd\left(d_{0},...,d_{n}\right)=1$.
Then 
$\Gamma_\lambda : \sD_{[X]}^\lambda\Qc \rightarrow D_{[X]_0}^\lambda\Mo$
is an equivalence of categories.
\end{cor}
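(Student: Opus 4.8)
The plan is to obtain the corollary as a formal consequence of Theorem~\ref{DMod-eq} and Theorem~\ref{DMod-ker}: nothing genuinely new has to be proved, only that the two sets of hypotheses can be imposed simultaneously and that the relevant functors then compose to the desired equivalence.

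\emph{First}, I would check that the hypothesis of Theorem~\ref{DMod-eq} is met. Observe that $-\sum_i d_i - \sA$ consists entirely of strictly negative integers, since $\sum_i d_i \geq 2$ (the dimension of $V$ is at least $2$ and each $d_i \geq 1$) while every element of $\sA$ is a non-negative integer, so each element of $-\sum_i d_i - \sA$ is at most $-2$. Hence, if $\lambda \in \bK \setminus \bZ$ then $\lambda$ is not an integer and so lies outside $-\sum_i d_i - \sA$; and if $\lambda \in \sA$ then $\lambda$ is a non-negative integer and so again $\lambda \notin -\sum_i d_i - \sA$. In either case $\lambda \in \bK \setminus (-\sum_i d_i - \sA)$, so Theorem~\ref{DMod-eq} applies: $\Gamma_\lambda$ is exact, $\Ke \Gamma_\lambda$ is a Serre subcategory of $\sD_{[X]}^\lambda\Qc$, and the induced functor
$$
\widetilde{\Gamma}_\lambda : \sD_{[X]}^\lambda\Qc / \Ke \Gamma_\lambda \longrightarrow {D_{[X]}^\lambda}_0\Mo
$$
is an equivalence of categories.

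\emph{Second}, since $\gcd(d_0,\ldots,d_n) = 1$ and $\lambda \in (\bK \setminus \bZ) \cup \sA$, Theorem~\ref{DMod-ker} gives that $\Ke \Gamma_\lambda$ is the zero category. Therefore the quotient functor $Q : \sD_{[X]}^\lambda\Qc \to \sD_{[X]}^\lambda\Qc / \Ke \Gamma_\lambda$ is an equivalence, because the Serre quotient by a subcategory all of whose objects are zero is the identity up to equivalence (concretely, $Q$ inverts exactly the morphisms whose kernel and cokernel lie in $\Ke\Gamma_\lambda$, which here are precisely the isomorphisms).

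\emph{Finally}, $\Gamma_\lambda = \widetilde{\Gamma}_\lambda \circ Q$ is a composite of two equivalences, hence is itself an equivalence $\sD_{[X]}^\lambda\Qc \to {D_{[X]}^\lambda}_0\Mo$. I do not anticipate any real difficulty: the substantive content is carried by Theorems~\ref{DMod-eq} and~\ref{DMod-ker}, and the only step requiring a small amount of attention is the elementary arithmetic above, verifying that the range $(\bK\setminus\bZ)\cup\sA$ of admissible $\lambda$ together with the condition $\gcd(d_i)=1$ lies inside the range $\bK\setminus(-\sum_i d_i-\sA)$ permitted by Theorem~\ref{DMod-eq}.
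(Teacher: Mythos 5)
Your proposal is correct and matches the paper's own (largely implicit) argument: the paper simply states ``Together with Theorem~\ref{DMod-eq} this gives the following corollary,'' and you have supplied exactly the details that sentence leaves to the reader, namely the arithmetic check that $(\bK\setminus\bZ)\cup\sA$ is disjoint from $-\sum_i d_i-\sA$, the application of Theorem~\ref{DMod-ker} to kill $\Ke\Gamma_\lambda$, and the observation that the quotient by the zero subcategory is an equivalence. No gap.
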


In particular, we obtain a necessary and sufficient condition for a weighted projective stack to be D-affine.
\begin{cor}
  The weighted projective stack
  $[X]=[\bP(V)]$ is D-affine if and only if
$\gcd_{i} (d_i)$ is equal to $1$. 
\end{cor}
\begin{proof}
	D-affinity deals with the case of $\lambda =0$. $\Gamma_0$ is exact, and its kernel is zero if and only if $\gcd_{i} (d_i)$ is equal to $1$.
\end{proof}
A similar functor for varieties 
$$
\Gamma^\prime_\lambda : \sD_{X}^\lambda\Qc \rightarrow D_{[X]_0}^\lambda\Mo
$$
is studied by Van den Bergh \cite{MVB}.
It is instructive to compare it with
the push-forward functor
$$
\pi_\ast : \sD_{[X]}^\lambda\Qc \rightarrow \sD_{X}^\lambda\Qc .
$$
The functors $\Gamma^\prime_\lambda \pi_\ast$ and $\Gamma_\lambda$
are naturally equivalent, so we can conclude 
the final corollary.
\begin{cor}
Let us suppose that 
$\lambda\in\bK\setminus\bZ\cup\sA$
and
$\gcd_{i\neq j}\left(d_i\right)=1$ for every $j$ 
(the well-formedness condition).
Then 
the push-forward functor
$
\pi_\ast : \sD_{[X]}^\lambda\Qc \rightarrow \sD_{X}^\lambda\Qc 
$
is an equivalence of categories.
\end{cor}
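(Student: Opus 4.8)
The plan is to derive this from the preceding corollary together with Van den Bergh's theorem, via the standard ``two out of three'' principle for equivalences. First I would note that the well-formedness hypothesis is stronger than the condition $\gcd_i(d_i)=1$ used above: fixing any index $j$, one has $\gcd_i(d_i)=\gcd\bigl(d_j,\gcd_{i\neq j}(d_i)\bigr)=\gcd(d_j,1)=1$. Hence, for $\lambda$ in the indicated range, the preceding corollary applies and $\Gamma_\lambda:\sD_{[X]}^\lambda\Qc\to D_{[X]_0}^\lambda\Mo$ is an equivalence of categories.

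Next I would invoke the result of Van den Bergh \cite{MVB}, which under the well-formedness condition gives that the analogous global sections functor on the coarse moduli space, $\Gamma^\prime_\lambda:\sD_X^\lambda\Qc\to D_{[X]_0}^\lambda\Mo$, is an equivalence. Combining this with the natural equivalence $\Gamma^\prime_\lambda\circ\pi_\ast\cong\Gamma_\lambda$ recorded just above the statement, and choosing a quasi-inverse $(\Gamma^\prime_\lambda)^{-1}$ of $\Gamma^\prime_\lambda$, one gets natural isomorphisms
$$
\pi_\ast\;\cong\;(\Gamma^\prime_\lambda)^{-1}\circ\Gamma^\prime_\lambda\circ\pi_\ast\;\cong\;(\Gamma^\prime_\lambda)^{-1}\circ\Gamma_\lambda .
$$
The right-hand side is a composite of equivalences, so $\pi_\ast$ is an equivalence, which is the assertion.

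The formal skeleton of this argument is routine; the two points requiring care are bookkeeping in nature. The first is the natural equivalence $\Gamma^\prime_\lambda\pi_\ast\cong\Gamma_\lambda$, which amounts to a Leray-type identity $\Gamma_X\circ\pi_\ast\cong\Gamma_{[X]}$ for the underlying quasicoherent sheaves, refined to the twisted D-module setting and to the saturation descriptions of the two global sections functors; this is asserted in the text preceding the statement and I would simply cite it. The second, and the one I expect to be the main obstacle, is verifying that the range of twists for which Van den Bergh proves the D-affinity of $X$ actually contains our set $\bK\setminus\bZ\cup\sA$: this requires reconciling his normalisation of the sheaf of twisted differential operators (and of the parameter $\lambda$) with ours, and confirming that well-formedness is precisely the hypothesis under which his equivalence holds. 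Once these conventions are matched the corollary follows immediately.
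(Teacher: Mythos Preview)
Your proposal is correct and follows exactly the approach indicated in the paper: the paper's entire argument is the sentence preceding the corollary, namely that $\Gamma^\prime_\lambda\pi_\ast\cong\Gamma_\lambda$, together with the previous corollary and Van den Bergh's theorem. You have in fact supplied more detail than the paper does, including the useful remark that well-formedness implies $\gcd_i(d_i)=1$ (so that the preceding corollary applies), and you are right to flag that matching the twist parameter conventions with \cite{MVB} is where any residual care is needed.
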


It can be noticed as well that the condition of well-formedness is not required for a weighted projective stack to be D-affine. We only need the greatest common divisor of its weights to be equal to one to guarantee it. As varieties, this condition was added to prove D-affinity of weighted projective spaces.

\bibliography{ams_Dmod}{}
\bibliographystyle{amsplain}
\end{document}